\DeclareMathOperator{\sinc}{\mathbin{\operatorname{sinc}}}
\DeclareMathOperator{\sinhc}{\mathbin{\operatorname{sinhc}}}
\DeclareMathOperator{\artanh}{\mathbin{\operatorname{artanh}}}
\DeclareMathOperator{\artanhc}{\mathbin{\operatorname{artanhc}}}
\newcommand{\rabs}{\mathbf{abs}}
\newcommand{\R}{\mathbb{R}} 
\newcommand{\cD}{{\cal D}}
\newcommand{\hx}{\hat x}
\newcommand{\hv}{\hat v}
\newcommand{\hz}{\hat z}
\newcommand{\hu}{\hat u}
\newcommand{\hy}{\hat y}
\newcommand{\hw}{\hat w}
\newcommand{\hF}{\hat F}
\newcommand{\cx}{\check x}
\newcommand{\cv}{\check v}
\newcommand{\cz}{\check z}
\newcommand{\cu}{\check u}
\newcommand{\cy}{\check y}
\newcommand{\cw}{\check w}
\newcommand{\cF}{\check F}
\newcommand{\rx}{\mathring x}
\newcommand{\rv}{\mathring v}
\newcommand{\rz}{\mathring z}
\newcommand{\ru}{\mathring u}
\newcommand{\ry}{\mathring y}
\newcommand{\rw}{\mathring w}
\newcommand{\rc}{\mathring c}
\newcommand{\rF}{\mathring F}
\newcommand{\mathrad}{\delta\!}
\renewcommand{\epsilon}{\varepsilon}
\theoremstyle{plain}
\newtheorem{theorem}{Theorem}[section]
\newtheorem{corollary}[theorem]{Corollary}
\newtheorem{lemma}[theorem]{Lemma}
\newtheorem{proposition}[theorem]{Proposition}
\theoremstyle{definition}
\newtheorem{definition}{Definition}
\theoremstyle{remark}
\newtheorem{remark}{Remark}
\begin{document}
\title{Piecewise Linear Secant Approximation via \\
Algorithmic Piecewise Differentiation}

 \author[1]{Andreas Griewank}
 \author[2,3]{Tom Streubel}
 \author[3]{Lutz Lehmann}
 \author[4]{Manuel Radons}
 \author[3]{Richard Hasenfelder}
 
 \affil[1]{School of Mathematical Sciences and Information Technology, Ecuador}
 \affil[2]{Zuse Institute Berlin, Germany}
 \affil[3]{Humboldt University of Berlin, Germany}
 \affil[4]{Technical University of Berlin, Germany}
 


\maketitle

\noindent 
\begin{abstract}
It is shown how piecewise differentiable functions $F:  \R^n \mapsto \R^m $ that are defined by
evaluation programs can be approximated locally by a piecewise linear
model based on a pair of sample points $\check x$ and $\hat x$.  
We show that the discrepancy between function and model at any point $x$ is of
the bilinear order $\mathcal O(\|x-\cx\| \|x-\hx\|)$.
As an application of the piecewise linearization procedure we devise a generalized 
Newton's method based on successive piecewise linearization and 
prove for it sufficient conditions for convergence and convergence rates equaling those of 
semismooth Newton. We conclude with the derivation of formulas for the numerically stable implementation 
of the aforedeveloped piecewise linearization methods.
\end{abstract}

\vspace*{5pt}
\noindent 
{\bf Keywords}
Automatic differentiation, Stable piecewise linearization, Generalized Newton's method, Lipschitz continuity,
Generalized Hermite interpolation, ADOL-C \vspace{4pt}

\noindent 
{\bf MSC2010}
65D25, 65K10, 49J52

\section{Introduction}

In this paper we refine and extend the theory of piecewise linearizations\footnote{Our notion of linearity includes
nonhomogeneous functions, where the adjective {\em affine} or perhaps {\em
polyhedral} would be more precise. However, such mathematical terminology might
be less appealing to computational practicioners and to the best of our
knowledge  there are no good nouns corresponding to {\em linearity} and {\em
linearization} for the adjectives affine and polyhedral.} of piecewise differentiable functions $F:\R^n\mapsto\R^m$ that was introduced in \cite{mother}. 
Throughout, we assume that any such function is defined by an evaluation procedure consisting of a sequence of
elemental functions $ v_i = \varphi_i(v_j)_{j\prec i}$, where $\varphi_i$ is either smooth or the absolute value function. The data dependence relation $\prec$ generates a partial ordering, 
which yields a directed acyclic graph.  
In other words, we assume that we have a straight line program without any
loops or jumps in the control flow.

In the above reference a piecewise linearization $\Delta
F(\rx;\Delta x)$ of $F$ about the point $\rx$ is constructed such that, for arbitrary $x \in
\R^n$  with $\rF = F(\rx)$, it holds
\[ F(x) \; = \;  \rF + \Delta F(\rx;x-\rx ) +  \mathcal O( \|x -\rx \|^2)\; . \]
Since  $\Delta F(\rx;\Delta x)$ is constructed by
replacing smooth elementals by their tangent lines at $\rx$, we will refer to it
as \textbf{piecewise tangent linearization}. An important property of these piecewise linearizations is that they
vary continuously with respect to the sample point or points at which they are developed.

As a generalization of $\Delta F(\rx;\Delta x)$ we construct a
\textbf{piecewise secant linearization} $\Delta F(\cx, \hx;\Delta x)$ of $F$ at
the pair $\cx$ and $\hx$  such that for the midpoints $\rx = (\cx+\hx)/2$ and
$\rF = (\cF + \hF)/2$ with  $\cF = F(\cx)$ and $\hF = F(\hx)$ it holds 
\[ F(x )\; = \; \rF + \Delta F(\cx, \hx;x-\rx ) \, + \, {\cal O}( \| x- \cx \|
\| x- \hx \| )\;.\]
The new $\Delta F(\cx, \hx;
\Delta x)$ reduces to $\Delta F(\rx;\Delta x)$ when the two sample points $\cx$ and $\hx$ coalesce at $\rx$.

Our results in this article are twofold: 

\begin{itemize}
 \item Firstly, we present approximation properties and Lipschitz continuity estimates. Here the two major points are that 
the piecewise linearization is a second order approximation to the underlying function. Moreover, we do not only prove the existence of Lipschitz constants, but provide explicit estimates.
These results immediately yield additional statement on conditions for perturbation-stable surjectivity of a piecewise linear model.
 \item Secondly, we develop an application for the piecewise linearization. Namely, we introduce, for each linearization mode, a Newton's method based on successive piecewise linearization and give 
sufficient conditions for convergence, as well as statements on the convergence rates.
\end{itemize}
In an appended section we moreover provide formulas for singularity free and thus numerically stable 
implementations of the piecewise secant linearizations.

One significant advantage of the piecewise linearization-approach is that, once the piecewise linearization is generated, we also know where its kinks are located. 
This  means that we are liberated of the complications of event handling that the nondifferentiabilities of piecewise smooth functions usually bring about, 
which was one of the motivations for the development of the techniques presented in \cite{mother}. In said reference it is proved that the variations 
of the piecewise linear model are Lipschitz continuous with respect to perturbations of the development point. 
The results mentioned in the first bullet point represent a significant improvement over those given in \cite{mother} in so far as they are not only sharper, 
but also provide explicit bounds for the Lipschitz constants for the variations of the model with respect to perturbations of the base point.


\subsection*{Content and Structure}

To provide some more background, we will, in the second section, elaborate on the setting outlined above both from the mathematical and the implementation perspective. 
In Section 2 we will introduce the piecewise linearization framework that is investigated thereafter. In Section 3 we derive the approximation and stability properties of said framework. In the subsequent section the generalized Newton's methods are derived for both linearization modes. 
In Section 5 we show that the secant linearization can be
computed in a division free, centered form such that it reduces continuously to
the tangent mode when the reference points $\cx$ and $\hx$ coalesce. The latter is followed by a numerical example in Section 6. The article is concluded by some final remarks.


\subsection*{Preliminary Remarks}

Let $\tilde\Phi$ be a library of elemental functions that conforms to the condition of elemental differentiability as described in \cite{buch}.  
In the following we will consider piecewise differentiable functions that are defined by an evaluation procedure consisting of a sequence of
such elemental functions $ v_i = \varphi_i(v_j)_{j\prec i}$, where the $\varphi_i$ are contained in a library $$\Phi:=\tilde\Phi\; \cup\; \{\rabs()\}\; .$$ 
We remark that the inclusion of the absolute value function into the library means that we can also evaluate $\min()$ and $\max()$ via the identities 
\begin{equation}
 \max(u,w) = (u+w+\rabs(u-w))/2 \; , \quad \min(u,w) = (u+w-\rabs(u-w))/2\; .
\end{equation}
There is a slight implicit restriction, namely, we assume that whenever $\min$
or $\max$ are evaluated both their arguments have well defined finite values so
that the same is true for their sum and difference.
On the other hand, the expression $\min(1,1/\rabs(u))$ makes perfect 
sense in IEEE arithmetic \cite{ieee}, but rewriting it as above leads to a $NaN$ at $u=0$. 
While this restriction may appear quite technical, it imposes the requirement
that all relevant quantities are well defined at least in some open
neighborhood, which is exactly in the nature of piecewise differentiability. 
For an in-depth investigation of piecewise differentiable functions, see, e.g., the books by Kummer \cite{kummer} and Scholtes \cite{scholtespl}. 


The function $\Delta F(\rx;\Delta x)$ is incremental in that $\Delta F(\mathring x;0) =0$,
but like general piecewise linear continuous functions, it is only locally and
positively homogeneous so that 
\[ \Delta F(\rx;\alpha \, \Delta x) \; = \; \alpha \, \Delta F(\rx;\Delta x)
\quad \mbox{for} \quad 0 < \alpha \|\Delta x\| < \rho(\rx)\;. \] 
Here the bound $\rho(\rx)$ is positive everywhere, but generally not
continuous with respect to $\rx$.

In \cite{mother} it has been shown that the Jacobians of the linear pieces of 
$\Delta F(\rx;\alpha \Delta x)$ in the ball of radius $\rho(\rx)$ about $\rx$
are conically active generalized Jacobians of the underlying nonlinear function
$F(x)$. We will not elaborate on this connection here,
because even in the smooth case secant approximations need not correspond to
exact Jacobians.
In contrast, the generalized derivative sets in the sense of Clarke \cite{clarke}  are for
piecewise differentiable functions almost everywhere just singletons, containing
the classical Jacobian matrix. In floating point arithmetic the user or client
algorithm will then quite likely never {\em 'see'} the nonsmoothness or gain any
useful information about it.
 
\section{Piecewise Linearization by Tangents and Secants } 
Suppose the vector function $F : \cD \subset \R^n \rightarrow \R^m$ in question is evaluated by a 
sequence of assignments 
$$ v_i = v_j \circ v_k \quad \mbox{or} \quad   v_i =  \varphi_i(v_j) \quad 
\mbox{for} \quad i=1 \ldots l \; . $$
Here $\circ \in \{ +,-,*,/ \}$ is a polynomial arithmetic operation and 
$$ \varphi_i \in \Phi \; \equiv \; \{\mbox{rec, sqrt},  \sin, \cos, \exp, \log, \ldots , \rabs, \ldots \}   $$
a univariate function. The user or reader may extend the library by other locally Lipschitz-continuously 
differentiable functions like the analysis favorites 
$$ \varphi(u) \equiv |u| > 0 \; ?\;  u^p\cdot  \sin(1/u) : 0 \quad  \mbox{for}  \quad p \geq 3\; . $$ 
To fit into the the framework they then also have to supply an evaluation procedure for both 
the elemental function $\varphi$ and its derivative  $\varphi^\prime$, 
which cannot be based mechanically on the chain rule. 

Following the notation from \cite{buch}  we partition the sequence of scalar variables $v_i$ into the 
vector triple 
$$ (x,z,y) \; = \; \left ( v_{1-n}, \ldots , v_{-1}, v_{0}, \ldots , v_{l-m}, 
 v_{l-m+1},\ldots , v_l \right )  \, \in \, \R^{n+l}$$
such that $x \in \R^n $ is the vector of independent variables, $y \in \R^m $ the vector
of dependent variables and $z \in \R^{l-m}$ the (internal) vector of intermediates.  

Some of the elemental functions like the reciprocal $rec(u) \equiv 1/u$, the square root and
the logarithm are not globally defined. As mentioned above, we will assume that
 the input variables $x$ are 
restricted to an open domain ${\cal D} \subset \R^n$ such that all resulting intermediate values  $v_i =v_i(x)$ are 
well defined. 

 Throughout we will assume 
 that the evaluation procedure for $F$ involves exactly $s \geq 0$ calls to $\rabs()$, including $\min$ and 
 $\max$ rewritten or at least reinterpreted as discussed above.   Starting from $\rx$ and an increment 
 $\Delta x =  x -\rx$, we will now construct for each intermediate $v_i$ an  approximation 
 $$ v_i(\rx+\Delta x) -  \rv_i \; \approx \;  \Delta v_i  \equiv  \Delta v_i(\Delta x)\,. $$
 Here the incremental function $\Delta v_i(\Delta x)$ is continuous and piecewise linear, with $\rx$ or $\cx$ and $\hx$
 considered constant in the tangent and secant case, respectively.
 Hence, we will often list  $\Delta x$, but only rarely
 use $\rx$, $\cx$ and $\hx$ as arguments of the $\Delta v_i$ in proofs.  

\subsection*{Defining Relations for Tangent Approximation}
We use the reference values $\rv_i=v_i(\rx)$ and,  assuming that all $\varphi_i$ other than the absolute value
function are differentiable within the domain of interest, we may use the tangent linearizations 
\begin{eqnarray}
\Delta v_i  =  &\Delta  v_j \pm \Delta v_k &\quad \mbox{for} \quad v_i  =  v_j \pm v_k \,, \label{:linadd}\\
\Delta v_i  = & \rv_j \ast \Delta v_k +  \Delta v_j \ast \rv_k & \quad \mbox{for} \quad  v_i  =  v_j \ast v_k \,, \label{:linmult} \\
 \Delta v_i   =  & \rc_{ij}\ast \Delta v_j  
& \quad \mbox{for} \quad v_i= \varphi_i(v_j) \not \equiv \rabs() \,. \label{:linphi} 
\end{eqnarray}
Here $\rc_{ij} \equiv \varphi_i^\prime (\rv_j)$, which will be different for the secant linearization.    
 
If no absolute value or other 
nonsmooth elemental occurs, the function $y=F(x)$ is, at the current point, 
differentiable and by the chain rule we have the relation
$$ \Delta y \; = \Delta F(\rx;\Delta x) \; \equiv \;  F^\prime(\rx) \Delta x, $$
where $ F^\prime(x) \in \R^{m \times n}$ is the Jacobian matrix.
Thus we observe the obvious fact that smooth differentiation is 
equivalent to linearizing all elemental functions.

Now, let us move to the piecewise differentiable scenario, where  the absolute value function 
does occur $s>0$ times. We may then obtain a piecewise
linearization of the vector function $F(\rx+\Delta x)- F(\rx) $ by incrementing 
\begin{equation}
 \Delta v_i = \rabs(\rv_j + \Delta v_j) - \rv_i \quad \mbox{when} \quad v_i =
\rabs(v_j) \; .     \label{:absdiff} 
\end{equation}
Here $\rv_i=  \rabs(\rv_j)$, which will be slightly different for the secant linearization. 
In other words, we keep the piecewise linear function $\rabs()$ unchanged so that the 
resulting $\Delta y$ represents, for each fixed $x \in \cD$, the piecewise linear and continuous {\em increment function}
$$  \Delta y \; = \;  \Delta y(\Delta x) \; = \;  \Delta F(\rx;\Delta x) \; : \; \R^n \rightarrow \R^m \; . $$

\subsection*{Defining Relations for Secant Approximation}
In the tangent approximation the reference point was always the evaluation point $\rx$ and the resulting 
values $\rv_i = v_i(\rx)$. Now we will make reference to the midpoints
\begin{equation}
\rv_i \; \equiv \;  (\cv_i + \hv_i)/2 \quad \mbox{of} \quad \cv_i \equiv v_i(\cx)     \quad \mbox{and} \quad \hv_i \equiv v_i(\hx) \,.
\end{equation}
Consequently, we have the functional dependence $\rv_i \, = \, \rv_i(\cx, \hx)$, which is at least Lipschitz continuous under our assumptions.
Now an intriguing observation is that the recurrences \eqref{:linadd} and \eqref{:linmult} for arithmetic operations can stay just the 
same, and the recurrence \eqref{:linphi} for nonlinear univariates is still formally valid, except that the tangent slope 
$\varphi^\prime(\rv_j)$ must be replaced by the secant slope 
\begin{equation}
  c_{ij} \equiv  \left \{ \begin{array}{ll} (\cv_i-\hv_i)/(\cv_j-\hv_j)  & \; \mbox{if} \; \cv_j \ne \hv_j \\ \varphi_i^\prime (\rv_j)
             &  \; \mbox{otherwise} \end{array} \,. \right.   \label{:cdef} 
\end{equation} 
Theoretically, some $\cv_i$ and  $\hv_i$ may coincide, even if the underlying sample points $\cx$ and  $\hx$ are not selected 
identically, in which case the secant based model would reduce to the tangent based model.  
While exact coincidence of any pair $\cv_i$ and  $\hv_i$ is rather unlikely, taking the divided 
difference over small increments is likely to generate numerical cancellation. Therefore we will develop a division free centered form in Section \ref{appendix}. 
Finally,  the nonsmooth rule \eqref{:absdiff} can stay unchanged except that we now set
\begin{equation}
  \rv_i \;  \equiv  \;  \tfrac{1}{2} (\cv_i+\hv_i) \; = \; \tfrac{1}{2} [\rabs(\cv_j)+ \rabs(\hv_j)]  \,.  \label{:midef} 
\end{equation}
Hence, it is immediately clear that the 
new secant approximation reduces to the old tangent approximation when $\cx = \hx$. In general, we will denote the 
mapping between the input increments $\Delta x \in \R^n$ and the resulting values $\Delta y \in \R^m$ by 
$$  \Delta y \; = \;  \Delta y(\Delta x) \; = \;  \Delta F(\cx, \hx ;\Delta x) \; : \; \R^n \rightarrow \R^m \; . $$ 
Its piecewise linear structure is very much the same as that of the tangent based model, which is described in detail in 
\cite{mother}.  Here we emphasize its quality in approximating the underlying
nonlinear and nonsmooth $F$.
  

In contrast to the tangent model, the secant model is not a priori unique in that it depends quite strongly on the procedural representation of the vector function $F$ and not just its values, i.e. its properties as a mathematical map. For example, one can easily check that applying the above secant modeling rules to $f(x) = \log(\exp(x))$ does not yield the same approximation $\Delta f(\cx,\hx,\Delta x)$ as the one for $f(x)=x$.
On the other hand the natural secant linearization rule for the product $v = u \cdot w$ is equivalent to that obtained by applying the Appolonius identity
$$    u \cdot w  \; = \; \tfrac{1}{4} \left [ (u+w)^2   -  (u-w)^2 \right ] . $$
Of course, the same is true for the tangent linearization and we may assume without loss of generality that we only have 
three kinds of elemental functions, the addition, the modulus and smooth univariate functions. That reduction greatly simplifies the thoretical 
analysis but might not be numerically optimal for actual implementations.  
%
%

\section{Approximation and Stability Properties}\label{secApprox}
In contrast to the presentation in our previous papers we will now also use the nonincremental forms
 $$ \lozenge_{\rx}  F(x)     \; \equiv \;  F(\rx) + \Delta F(\rx; x-\rx )  $$
 and  
  $$ \lozenge_{\cx}^{\hx}  F(x)     \; \equiv \;  \tfrac{1}{2} ( F(\cx) + F(\hx))  + \Delta F(\cx, \hx; x-\rx ) \,.    $$
For the square as a univariate nonlinear function \(v(x) = x^2\) we find:
\begin{align}
	\lozenge_{\cx}^{\hx} v \;=\; \lozenge_{\cx}^{\hx} x^2 &= \tfrac 12[\cx^2 + \hx^2] + \frac{\hx^2 - \cx^2}{\hx - \cx}(x - \tfrac 12[\hx + \cx]) 
	= \tfrac 12 [\cx^2 + \hx^2] + (\hx + \cx)(x - \rx) \notag \\
	&= \tfrac 12 [\cx^2 + \hx^2] + 2\rx(x - \rx) = \mathring v + 2\rx(x-\rx). \label{eqn:squareFormula}
\end{align}         

\begin{lemma}
Plugging the secant approximation for the square into the Appolonius identity, we obtain for the general multiplication \(v = u\cdot w\):
\begin{align*}
 \lozenge_{(\cu,\cw)}^{(\hu,\hw)} (u\cdot w)    & 
 \; = \;    \tfrac{1}{4} \left [  \lozenge_{(\cu,\cw)}^{(\hu,\hw)}  (u+w)^2   -   \lozenge_{(\cu,\cw)}^{(\hu,\hw)}  (u-w)^2 \right ]  \\        
  & \; = \; \tfrac 12(\hu\hw + \cu\cw) + \rw  (  u - \ru ) +  \ru  (  w   - \rw ).
 \end{align*} 
\end{lemma}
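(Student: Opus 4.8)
The plan is to apply the square formula \eqref{eqn:squareFormula} to each of the two squares appearing on the right-hand side of the Appolonius identity, and then expand. Concretely, for the first square I set $a = u+w$, so that the development pair is $\check a = \cu + \cw$ and $\hat a = \hu + \hw$, with midpoint $\mathring a = \ru + \rw$; formula \eqref{eqn:squareFormula} then gives
\[
 \lozenge_{(\cu,\cw)}^{(\hu,\hw)} (u+w)^2 \; = \; \tfrac12[(\cu+\cw)^2 + (\hu+\hw)^2] + 2(\ru+\rw)\bigl((u-\ru)+(w-\rw)\bigr).
\]
Similarly, with $b = u-w$, $\check b = \cu - \cw$, $\hat b = \hu - \hw$, $\mathring b = \ru - \rw$, I get
\[
 \lozenge_{(\cu,\cw)}^{(\hu,\hw)} (u-w)^2 \; = \; \tfrac12[(\cu-\cw)^2 + (\hu-\hw)^2] + 2(\ru-\rw)\bigl((u-\ru)-(w-\rw)\bigr).
\]
Here I should be a little careful to justify that applying the secant rules to the composite expression $(u\pm w)^2$ really does produce the claimed instance of \eqref{eqn:squareFormula}: the intermediate $a = u+w$ is formed by rule \eqref{:linadd}, which is unchanged in secant mode, so $\Delta a = \Delta u + \Delta w$ and the sample values of $a$ are exactly $\cu+\cw$ and $\hu+\hw$; then the square is a smooth univariate elemental to which \eqref{eqn:squareFormula} directly applies.

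Next I subtract, multiply by $\tfrac14$, and collect terms. The constant (increment-free) part is
\[
 \tfrac14\Bigl(\tfrac12[(\cu+\cw)^2+(\hu+\hw)^2] - \tfrac12[(\cu-\cw)^2+(\hu-\hw)^2]\Bigr) = \tfrac14\bigl(2\cu\cw + 2\hu\hw\bigr) = \tfrac12(\cu\cw + \hu\hw),
\]
using $(p+q)^2-(p-q)^2 = 4pq$ twice. The linear part is
\[
 \tfrac14\Bigl(2(\ru+\rw)\bigl((u-\ru)+(w-\rw)\bigr) - 2(\ru-\rw)\bigl((u-\ru)-(w-\rw)\bigr)\Bigr),
\]
and expanding the two bracketed products, the $\ru(u-\ru)$ and $\rw(w-\rw)$ terms cancel while the cross terms $\ru(w-\rw)$ and $\rw(u-\ru)$ each appear twice with a plus sign, giving $\tfrac14\cdot 2\cdot 2\,[\rw(u-\ru)+\ru(w-\rw)] = \rw(u-\ru) + \ru(w-\rw)$. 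Adding the two parts yields exactly the claimed expression.

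The main obstacle is not the algebra, which is elementary, but the bookkeeping of \emph{which} secant linearization is being invoked: one must check that the piecewise linearization operator $\lozenge$ distributes over the Appolonius identity in the sense asserted, i.e. that linearizing $\tfrac14[(u+w)^2-(u-w)^2]$ termwise and then adding is the same as the natural secant rule \eqref{:linmult} for the product. For the tangent model this is immediate from linearity of differentiation; for the secant model it requires noting that the secant slope of $t\mapsto t^2$ at the pair $(\check a,\hat a)$ is $\check a + \hat a = 2\mathring a$ (already recorded in \eqref{eqn:squareFormula}), and then verifying that the resulting coefficient of $\Delta u$, namely $\tfrac14(2\mathring a_{+} + 2\mathring a_{-}) = \tfrac14(2(\ru+\rw)+2(\ru-\rw)) = \rw$ — wait, this should read $\ru$... — in any case that the coefficients of $\Delta u$ and $\Delta w$ coming out of the Appolonius route match $\rw$ and $\ru$ respectively, which is precisely what the computation above confirms. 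Once that identification is made explicit the lemma follows by the displayed term collection.
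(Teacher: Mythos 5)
Your proof is correct and follows essentially the same route as the paper's: apply the square formula \eqref{eqn:squareFormula} to $(u+w)^2$ and $(u-w)^2$ with development pairs $(\cu\pm\cw,\hu\pm\hw)$, subtract, divide by $4$, and collect the constant and linear parts. The momentarily confused aside about the coefficient of $\Delta u$ is harmless, since your main term collection (which correctly carries the minus sign from the Appolonius subtraction, yielding $\tfrac14\cdot 4\rw=\rw$) settles the point.
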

\begin{proof}
\begin{align*}
	4\cdot \lozenge_{(\cu,\cw)}^{(\hu,\hw)} (u\cdot w)    & 
	\; = \;    \left [  \lozenge_{(\cu,\cw)}^{(\hu,\hw)}  (u+w)^2   -   \lozenge_{(\cu,\cw)}^{(\hu,\hw)}  (u-w)^2 \right ]  \\         
	&\; =\;\quad \tfrac 12\left([\hu + \hw]^2 + [\cu + \cw]^2\right) + 2(\ru + \rw)\left( u+w\, -\, [\ru + \rw] \right) \\
	&\qquad\; -\; \tfrac 12\left([\hu - \hw]^2 + [\cu - \cw]^2\right) + 2(\ru - \rw)\left( u-w\, -\, [\ru - \rw] \right) \\
	&\; =\;\quad \tfrac 12\left([\hu + \hw]^2 + [\cu + \cw]^2\right) + 2(\ru + \rw)\left( [u - \ru] + [w - \rw] \right) \\
	&\qquad\; -\; \tfrac 12\left([\hu - \hw]^2 + [\cu - \cw]^2\right) + 2(\ru - \rw)\left( [u - \ru] - [w - \rw] \right) \\
	&\; =\; 4\cdot\left[ \tfrac 12\left(\cu\cw + \hu\hw\right) + \rw  [  u - \ru ] +  \ru [ w   - \rw ] \right]. 
\end{align*}
\end{proof}


Hereafter we will denote by \(\lVert\cdot\rVert\equiv\lVert\cdot\rVert_\infty\) the infinity norm. Due to the norm equivalence in finite dimensional spaces all inequalities to be derived take the same form in other norms, provided the constants are adjusted accordingly. The infinity norm is particularly convenient, since we can then prove the following result for the general vector case $m>1$ by considering the absolute values of the individual components $f = F_i$ for $i= 1 \ldots m$. Moreover, we will make use of the Appolonius identity in the following proposition:  

\begin{proposition}
\label{prop:approx}
Suppose $\tilde x, \cx, \hx, \cy, \hy, \cz, \hz \in \R^n$ are restricted to a sufficently small closed convex neighboorhod $K \subset \R^n$ where the evalution procedure  for $ F : \R^n \mapsto \R^m $ is well defined. Then there are Lipschitz constants $\beta_F$ and  $\gamma_F$ such that we have
\begin{enumerate}[(i)]
\item Lipschitz continuity of function, tangent and secant models: 
  \begin{alignat*}{4}
   \|F(x)- F(\tilde x)\| \; &\leq \;
 \beta_F \|x-\tilde x\|\; &\text{ for }&x,\tilde x \in K,  \\
\max \left (
 \|\lozenge_{\cx}^{\hx}  F(x) -  \lozenge_{\cx}^{\hx}  F( \tilde x)\| ,
 \|\lozenge_{\rx}  F(x) -  \lozenge_{\rx} F( \tilde x)\| \right )    \; &\leq \;
 \beta_F \|x-\tilde x\|\;& \text{ for }&x,\tilde x \in \R^n.
\end{alignat*}
 The constant $\beta_F$  can be defined by the recurrences  $\beta_v= \beta_u+ \beta_w$ if $v=u+v$, $\beta_v= \beta_u$ if $v = |u|$ and 
$$\beta_v = \beta_u L_K(\varphi)  \quad  \mbox{if} \quad  v =\varphi(u) \quad \mbox{with} \quad L_K(\varphi) \equiv \max_{x \in K} |\varphi^\prime (u(x)) |\; . $$   
\item Error between function and secant or tangent model:   
\begin{eqnarray*}
  \|F(x)-   \lozenge_{\cx}^{\hx}  F(x) \| &\; \leq \; & \tfrac{1}{2}  \gamma_F \|x - \cx\|\|x - \hx\|  \\[0.2cm]  
    \|F(x)-   \lozenge_{\rx} F(x) \|  &\; \leq \; &\tfrac{1}{2}  \gamma_F \|x -
    \rx\|^2\; ,
    \end{eqnarray*}
    where $x\in K$.
The constant $\gamma_F$ can be defined using the recurrences $\gamma_v= \gamma_u+ \gamma_w$ if $v=u+w$, $\gamma_v= \gamma_u$ if $v = |u|$ and 
$$\gamma_v =  L_K(\varphi) \gamma_u  + L_K(\varphi^\prime) \beta_u^2  \quad \mbox{if} \quad  v =\varphi(u)
\quad \mbox{with} \quad L_K(\varphi^\prime) \equiv \max_{x \in K} |\varphi^{\prime\prime} (u(x)) |\; . $$  
   
\item Lipschitz continuity of secant and tangent model: 
\begin{align*}
 \|   \lozenge_{\cz}^{\hz}  F(x) - \lozenge_{\cy}^{\hy}  F(x) \| &\; \leq \; \gamma_F \;
\max \left [\| \hz-\hy \| \max ( \|x-\cy\|,  \|x-\cz\| ),  \right .  \\
&  \hspace*{2.1cm}  \left . \| \cz-\cy \| \max ( \|x-\hy\|,  \|x-\hz\| ) \right ]  \\[0.2cm]
 \|   \lozenge_{\rz}  F(x) - \lozenge_{\ry}  F(x) \|  &\; \;  \, \leq \quad 
 \gamma_F \quad \;  \,  \|\rz-\ry\|  \max (\|x-\ry\|,  \|x-\rz\|  )\;,   &
\end{align*}
where $x\in\mathbb R^n$.

\item Lipschitz continuity of the incremental part:
Let $x\in K$. Abbreviating $\Delta y = \hy-\cy$ and  $\Delta z = \hz-\cz$ we obtain in the secant case
\begin{align*}
\; & \|   \Delta_{\cz}^{\hz}  F(\Delta x) - \Delta_{\cy}^{\hy}  F(\Delta x) \|  \\
\leq \; &   2\beta_F \| \rz -\ry\| +  
  \tfrac{1}{2}\gamma_F  \left ( \| \rz-\ry \| + \max  (\| \Delta y\|, \|\Delta
  z\| )\right )^2  \\
  +  &  \gamma_F  \left  (\| \rz-\ry \| +  \tfrac{1}{2}( \| \Delta y\|+ \|\Delta
  z\|) \right ) \|\Delta x\|\; ,
  \end{align*}
  which reduces in the tangent case to 
  \[
  \| \Delta_{\rz} F( \Delta x) - \Delta_{\ry} F(\Delta x) \|  
\;  \leq \;  2\beta_F \| \rz -\ry\| + \tfrac{1}{2}\gamma_F\lVert \rz -
\ry\rVert^2 + \gamma_F  \lVert \rz - \ry \rVert\|\Delta x\|\; .  \]

\end{enumerate} 
Here the second bounds applying to the tangent model are always specializations of the previous ones for the secant model. 
\end{proposition}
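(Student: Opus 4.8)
The plan is to prove all four parts by structural induction over the evaluation procedure, exploiting the reduction mentioned in the text so that only three elemental types occur: addition $v=u+w$, modulus $v=|u|$, and smooth univariate $v=\varphi(u)$. For each intermediate variable $v_i$ I would establish the claimed bound with $F$ replaced by $v_i$ and the constants $\beta_{v_i}, \gamma_{v_i}$ given by the stated recurrences, and then read off the statement for the output components $y=F_i$. The base case is the independent variables $x_j$, for which the models reproduce the identity exactly, so $\beta_{x_j}=1$ and $\gamma_{x_j}=0$ and all four bounds hold trivially. Throughout I would use the infinity norm as announced, which lets me treat the vector case $m>1$ componentwise and reduces everything to scalar-valued intermediates.

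For part (i), the inductive step is routine: addition gives $\beta_v=\beta_u+\beta_w$ by the triangle inequality; the modulus is $1$-Lipschitz so $\beta_v=\beta_u$; and for $v=\varphi(u)$, both the function value and the models are Lipschitz in $u$ with constant $L_K(\varphi)$ — for the models this uses that the secant slope $c_{ij}$ is a difference quotient of $\varphi$ on $K$, hence bounded by $L_K(\varphi)$ by the mean value theorem, and similarly for the modulus's piecewise-linear increment. Part (ii) is the analytic heart. For addition and modulus the error recurrence is immediate ($\gamma_v=\gamma_u+\gamma_w$, resp. $\gamma_v=\gamma_u$, using that $|a|-|b|$-type rearrangements cancel). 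For $v=\varphi(u)$ I would split the error $\varphi(u(x))-\lozenge_{\cx}^{\hx}\varphi$ into two contributions: the propagated error from the inner variable, controlled by $L_K(\varphi)\gamma_u\cdot\tfrac12\|x-\cx\|\|x-\hx\|$, plus the genuinely new second-order error incurred by replacing $\varphi$ with its secant line, which is a one-dimensional Hermite/secant interpolation error bounded by $\tfrac12 L_K(\varphi')$ times the product of the deviations of $u$ from $\cu$ and $\hu$; since $|u(x)-\cu|\le\beta_u\|x-\cx\|$ and likewise at $\hx$, this yields the $L_K(\varphi')\beta_u^2$ term. The key elementary fact here is the secant-error identity for a $C^2$ function of one variable: $\varphi(t)-[\tfrac12(\varphi(\check t)+\varphi(\hat t))+c\,(t-\tfrac12(\check t+\hat t))]$ equals $\tfrac12\varphi''(\xi)(t-\check t)(t-\hat t)$ for some $\xi$, which follows from the standard divided-difference remainder and is exactly \eqref{eqn:squareFormula} made quantitative.

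Parts (iii) and (iv) I would derive as corollaries rather than by a fresh induction. For (iii), fix $x$ and compare the two secant models built on $(\cz,\hz)$ and $(\cy,\hy)$; adding and subtracting $F(x)$ and applying (ii) twice gives $\|\lozenge_{\cz}^{\hz}F(x)-\lozenge_{\cy}^{\hy}F(x)\|\le\tfrac12\gamma_F(\|x-\cz\|\|x-\hz\|+\|x-\cy\|\|x-\hy\|)$, and then a short manipulation — writing $\|x-\hz\|\le\|x-\hy\|+\|\hz-\hy\|$ and symmetrically — rearranges this into the stated max-form; the tangent case is the specialization $\cz=\hz=\rz$, $\cy=\hy=\ry$. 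For (iv), I would pass from the nonincremental models $\lozenge$ to the incremental ones $\Delta$ using $\Delta_{\cz}^{\hz}F(\Delta x)=\lozenge_{\cz}^{\hz}F(x)-\tfrac12(F(\cz)+F(\hz))$; the difference of two such incremental quantities splits into the (iii)-type model difference plus the difference of the averaged sample values $\tfrac12(F(\cz)+F(\hz))-\tfrac12(F(\cy)+F(\hy))$, the latter bounded via (i) by $2\beta_F\|\rz-\ry\|$ up to the $\|\Delta y\|,\|\Delta z\|$ corrections, which accounts for the extra $2\beta_F\|\rz-\ry\|$ and the reshuffled quadratic and mixed terms. The main obstacle I anticipate is purely bookkeeping: in (iii) and (iv), massaging the clean product-form bounds from (ii) into the prescribed max-expressions while keeping every constant exactly as stated requires careful, slightly tedious triangle-inequality juggling with the midpoints $\rz=(\cz+\hz)/2$; the conceptual content is entirely in the scalar secant-remainder estimate used for part (ii).
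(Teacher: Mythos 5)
Your treatment of parts (i) and (ii) coincides with the paper's: a structural induction with base case $\beta_{x_j}=1$, $\gamma_{x_j}=0$, the mean value theorem to bound the secant slope by $L_K(\varphi)$, and the split of the univariate error into the propagated term $L_K(\varphi)\gamma_u$ plus the one-dimensional secant-interpolation remainder giving $L_K(\varphi')\beta_u^2$. That part is fine.

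The genuine gap is in part (iii). You propose to obtain it from (ii) by inserting $F(x)$, which yields
$\|\lozenge_{\cz}^{\hz}F(x)-\lozenge_{\cy}^{\hy}F(x)\|\le\tfrac12\gamma_F\left(\|x-\cz\|\|x-\hz\|+\|x-\cy\|\|x-\hy\|\right)$,
and then to ``rearrange'' this into the stated max-form. This cannot work: the asserted bound in (iii) carries the factors $\|\hz-\hy\|$ and $\|\cz-\cy\|$ and therefore tends to zero as the two base-point pairs coalesce, whereas your intermediate bound tends to $\gamma_F\|x-\cy\|\|x-\hy\|$, which is generically positive. No triangle-inequality substitution such as $\|x-\hz\|\le\|x-\hy\|+\|\hz-\hy\|$ can manufacture the small factors --- those estimates only enlarge the right-hand side. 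Assertion (iii) is a Lipschitz statement about the dependence of the \emph{model} on its base points, and the information it encodes (that the two models agree exactly when the base points agree) is simply not contained in the approximation bounds of (ii). The paper in fact notes the implication in the opposite direction: (ii) almost follows from (iii) by setting $\cy=x=\cz$, at the cost of a factor of $2$. Accordingly, the paper proves (iii) by a fresh induction whose hard case is again $v=\varphi(u)$: the two base-point pairs are joined by a linear homotopy $t\mapsto(\cu(t),\hu(t),\tilde u(t))$, the mean value theorem is applied to $\tilde v(t)$, and the resulting expression is rewritten in terms of divided differences of $\varphi'$, each bounded by $L_K(\varphi')$, to recover exactly the constant $\gamma_v=L_K(\varphi)\gamma_u+L_K(\varphi')\beta_u^2$. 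Your outline for (iv) (reduce to a (iii)-type model difference at a common evaluation point plus function-value differences controlled by (i)) does match the paper's derivation, but as written it rests on the broken step for (iii), so the proof as a whole does not go through without replacing that step by a genuine induction.
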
 

\begin{proof}
Since otherwise the bounds can be applied componentwise we may assume without loss of generality that $F$ is a scalar function $f$ and the norm in the range is simply the absolute value $|\cdot |$.   
The proof proceeds by induction on the intermediate quantities $v$ in the computational graph of $f$. We will define the constants 
$\beta_v$ and $\gamma_v$ recursively on the basis of the Lipschitz constants of the elemental functions and their derivatives:

	\emph{Variable initialization:} The initialization of independant variables represent the minimal nodes and all assertions are tivially true with the constants $\beta_{x_i} = 1$ and $\gamma_{x_i} =0$.
	
	\emph{Smooth univariate operation:} Let \(v = \varphi(u)\), \(\varphi \in \tilde \Phi\), be some elemental function in the computational graph of \(f\):
	\begin{align*}
		\lVert \varphi(u) - \varphi(\tilde u) \rVert &\le L_K(\varphi) \lVert u - \tilde u \rVert \le \beta_u L_K(\varphi)\lVert x - \tilde x \rVert \\
		\text{and}\quad\lVert \lozenge_{\cu}^{\hu} \varphi(u) - \lozenge_{\cu}^{\hu} \varphi(\tilde u) \rVert &\le L_K(\lozenge_{\cu}^{\hu} \varphi) \lVert u - \tilde u \rVert \le \beta_u L_K(\lozenge_{\cu}^{\hu} \varphi)\lVert x - \tilde x \rVert.
	\end{align*}
	The first inequality holds due to the Lipschitz continuity of \(\varphi\) and \(\lozenge_{\cu}^{\hu} \varphi\). 
	The latter inequality is the induction hypothesis and since \(L_K(\lozenge_{\cu}^{\hu} \varphi) \le L_K(\varphi)\) holds by the mean value theorem, we may set \(\beta_v \equiv \beta_u L_K(\varphi)\).
	
	\emph{Absolute value function and sum:} The absolute value function $v = \rabs(u)$ naturally maintains the Lipschitz constant and the addition summates them. 

Thus we have established $(i)$ for all cases.
  
Now let us consider the approximation property $(ii)$. For additions $v= u+w$ we may set $\gamma_v = \gamma_u+ \gamma_w$ and then have by the triangle inequality  
\begin{align*}
& |v(x)-   \lozenge_{\cx}^{\hx}  v(x) | \; \leq \;  |u(x)-   \lozenge_{\cx}^{\hx}  u(x) | +  |w(x)-   \lozenge_{\cx}^{\hx}  w(x)  | \\
& \; \leq  \;  \tfrac{1}{2} \gamma_u (\|x - \cx\|\|x - \hx\|) +   \tfrac{1}{2}  \gamma_w (\|x - \cx\|\|x - \hx\|)  \; = \;   \tfrac{1}{2} \gamma_v (\|x - \cx\|\|x - \hx\|) \,.  
 \end{align*} 
 For the absolute value function $v = \rabs(u)$ we may also set $\gamma_v = \gamma_u$, since 
 $$ |v(x)-   \lozenge_{\cx}^{\hx}  v(x) |  = | |u| -   | \lozenge_{\cx}^{\hx}  u(x) |  |  \leq   | u -    \lozenge_{\cx}^{\hx}  u(x) | 
 \leq  \tfrac{1}{2} \gamma_v (\|x - \cx\|\|x - \hx\|) \,. $$
For the univariate functions $v= \varphi(u)$ we have with $ \tilde  u \equiv \lozenge_{\cx}^{\hx} u(x)$
\begin{align*}
|v(x)-   \lozenge_{\cx}^{\hx}  v(x) | & \leq  |  \varphi(u(x)) - \varphi( \tilde u) |    +  
| \varphi( \tilde u) -  \lozenge_{\cu}^{\hu} \varphi(\tilde u)  | \,.
\end{align*}
By the mean value theorem and the induction hypothesis, the first term is bounded by 
$$   L_K(\varphi) |u(x) - \lozenge_{\cx}^{\hx}  u(x) | \leq   \tfrac{1}{2} L_K(\varphi) \gamma_u (\|x - \cx\|\|x - \hx\|) \,.  $$
The second term represents the error in the Hermite interpolation of $\varphi$ between $\cu$ and $\hu$. 
 With $L_K(\varphi^\prime)$ a Lipschitz constant of $\varphi^\prime$ on $u(K)$ it is bounded by 
 $$ \tfrac{1}{2} L_K(\varphi^\prime) |\tilde u - \cu|  |\tilde u - \hu|   \; \leq \; \tfrac{1}{2} L_K(\varphi^\prime)   \beta_u^2 \, (\|x - \cx\|\|x - \hx\|) \,, $$ 
 where the last bound follows from the fact that according to $(i)$ the approximation $\lozenge_{\cx}^{\hx} u(x)$ has the 
 Lipschitz constant $\beta_u$ and takes on at $\cx$ and $\hx$ the values $\cu$ and $\hu$.  Hence, we have shown that (ii) 
 holds indeed with 
 \begin{equation}
  \gamma_v \; = \;  L_K(\varphi)   \gamma_u \, + \, L_K(\varphi^\prime)   \beta_u^2 \,.
  \label{gammarecur}
\end{equation} 
Next we want to prove Lipschitz continuity of the model as stated in $(iii)$. Again, 
we find for additions and the abs function that the assertion is almost trivial 
with the constants $\gamma$ either being summated or just passed on. The challenge 
is once more the induction through the nonlinear univariates $v = \varphi(u)$.
To limit the notational complexity we will connect the two point pairs at 
the $u$ level by straight lines setting 
\begin{eqnarray*}
   \cu \equiv  \cu(t) \equiv   u(\cy)(1-t) + t u(\cz)  & \; \Rightarrow \; &  \partial \cu(t)/\partial t =  \Delta \cu  \equiv  u(\cz) - u(\cy) \,, \\
  \hu \equiv  \hu(t) \equiv   u(\hy)(1-t) + t u(\hz)   &\; \Rightarrow \; & \partial \hu(t)/\partial t =  \Delta \hu  \equiv  u(\hz) - u(\hy) \,, \\
  \tilde u \equiv  \tilde u(t) \equiv  \lozenge_{\cy}^{\hy} u(x) (1-t) + t \lozenge_{\cz}^{\hz} u(x) 
& \; \Rightarrow \; &  \partial  \tilde u / \partial t =  \Delta  \tilde u \equiv   \lozenge_{\cz}^{\hz} u(x) - \lozenge_{\cy}^{\hy}  u(x) \,.
\end{eqnarray*}
Here $x$ is fixed and we assume as induction hypothesis  that 
 \begin{equation}  \| \Delta \tilde u \| \; \leq \;    \gamma_u \max
 \bigl [\| \hz-\hy \| \max ( \|x-\cy\|,  \|x-\cz\| )  ,  \| \cz-\cy \| \max ( \|x-\hy\|,  \|x-\hz\| ) \bigr] \,. \label{induction}
\end{equation} 
To connect the piecewise linearizations of $v$ define
\begin{align*}
\tilde v(t)&\equiv\tfrac{1}{2} \left [ \varphi(\cu(t)) + \varphi(\hu(t))  \right  ]  +   \frac{\varphi(\cu(t)) -  \varphi(\hu(t))}{\cu(t)-\hu(t)} 
 \left[  \tilde u(t)   -   \tfrac{1}{2} \left (\cu(t) + \hu(t) \right ) \right ] 
\end{align*}
The quantity we want to find a bound for is 
\(
\Delta\tilde v \equiv \tilde v(1)-\tilde v(0) 
      = \lozenge_{\cz}^{\hz}  v(x)-\lozenge_{\cy}^{\hy}  v(x).  
\)
By the mean value theorem we find some $\bar t \in [0,1]$ where 
\begin{align*}
\Delta \tilde v &= \tilde v(1)-\tilde v(0)
= \left.\frac{\partial\tilde v(t)}{\partial t}  \right|_{t=\bar t}   \\ 
& =  \tfrac{1}{2} \left [ \varphi^\prime (\cu) \Delta \cu  + \varphi^\prime (\hu)  \Delta \hu   \right  ]  +  \frac{\varphi(\cu) -  \varphi(\hu)}{\cu-\hu} 
\left[  \Delta \tilde u   -   \tfrac{1}{2} \left (\Delta \cu + \Delta \hu \right ) \right ]   \\
& +   \left \{  \frac{ \varphi^\prime (\cu) \Delta \cu  - \varphi^\prime (\hu)  \Delta \hu }{\cu-\hu}  -  
 \frac{(\varphi(\cu) -  \varphi(\hu))( \Delta \cu - \Delta \hu )}{(\cu-\hu)^2}    \right \}    \left[  \tilde u   -   \tfrac{1}{2} \left (\cu + \hu \right ) \right ] .  
 \end{align*}
The functions $\cu,\hu,\tilde u$ here and following are to be read as evaluated at $t=\bar t$.
Now introduce \(\bar u\) as the mean value of \(\cu\) and \(\hu\) at which the difference quotient of $\varphi$ over the intervening interval is equal to its derivative. This yields:
\begin{align*}
 \Delta \tilde v & =  \tfrac{1}{2} \left [ \varphi^\prime (\cu) \Delta \cu  + \varphi^\prime (\hu)  \Delta \hu   \right  ]  +  \varphi^\prime{(\bar u)}  
\left[  \Delta \tilde u   -   \tfrac{1}{2} \left (\Delta \cu + \Delta \hu \right ) \right ]   \\
 & +   \left \{  \frac{    \varphi^\prime (\cu)  \Delta \cu   +  
 \varphi^\prime (\bar u ) ( \Delta \hu - \Delta \cu ) - \varphi^\prime (\hu) \Delta \hu }{\cu-\hu}    \right \}    \left[  \tilde u   -   \tfrac{1}{2} \left (\cu + \hu \right ) \right ]  \\
  & =  \tfrac{1}{2} \left \{ \frac{(\varphi^\prime (\cu) -  \varphi^\prime{(\bar u)} ) \Delta \cu  + (\varphi^\prime (\hu)-\varphi^\prime{(\bar u)}) \Delta \hu }{\cu - \hu}  \right  \}   (\cu - \hu)  +  \varphi^\prime{(\bar u)}   \Delta \tilde u    \\
 & +  \tfrac{1}{2} \left \{  \frac{ (\varphi^\prime (\cu) - \varphi^\prime (\bar u) )  \Delta \cu  - (\varphi^\prime (\hu)   -  
 \varphi^\prime (\bar u ) ) \Delta \hu  }{\cu-\hu}    \right \}    \left[  (\tilde u   -  \cu) + ( \tilde u   -\hu  ) \right ]  \\[0.2cm]
  & = \frac{\varphi^\prime(\bar{u})-\varphi^\prime(\hat u)}{\check{u}-\hat{u}}\Delta\hat{u}(\tilde{u}-\check{u})-\frac{\varphi^\prime(\bar{u})-\varphi^\prime(\check{u})}{\check{u}-\bar{u}}\Delta\check{u}(\tilde{u}-\hat{u}) + \varphi^\prime(\bar{u})\Delta\tilde{u} \,. 
\end{align*} 
The two quotients are bounded according to 
\begin{eqnarray*}
    \left |  \frac{\varphi^\prime(\bar{u})-\varphi^\prime(\check u)}{\check{u}-\hat{u}} \right | + \left |  \frac{\varphi^\prime(\bar{u})-\varphi^\prime(\hat u)}{\check{u}-\hat{u}} 
    \right | & \! \!  \leq \! \!   & 
 \left |  \frac{\varphi^\prime(\check{u})-\varphi^\prime(\bar u)}{\check{u}-\bar{u}}  \right |  \left |\frac{ \bar{u}-\check{u} }{\check{u}-\hat{u}} \right | + \left |  \frac{\varphi^\prime(\hat{u})-\varphi^\prime(\bar u)}{\hat{u}-\bar{u}}  \right |  \left |\frac{ \bar{u}-\hat{u} }{\check{u}-\hat{u}} \right |   \\ & \! \! \leq \! \!   & L_K(\varphi^\prime)  \left ( \left |\frac{ \bar{u}-\check{u} }{\check{u}-\hat{u}} \right | + \left |\frac{ \bar{u}-\hat{u} }{\check{u}-\hat{u}} \right | \right )  =   L_K(\varphi^\prime) \,,
  \end{eqnarray*}
  where the last equality follows from $\bar u$ being between $\cu$ and $\hu$. Hence, we find that 
\begin{equation} \left |  \Delta \tilde v  \right |  \;  \leq \;    L_K(\varphi^\prime) 
\max \left (|\Delta\check{u}||\tilde{u}-\hat{u}|+| \Delta\hat{u}||\tilde{u}-\check{u}| \right ) + L_K(\varphi) |\Delta\tilde{u} |  \,.
\label{firstbound}
\end{equation} 
The factors in the middle are easily bounded by 
   \[    
  | \Delta\check{u} | = | u(\check{z})-u(\check{y})| \leq \beta_u\left \|\cz-\cy\right \| 
  \quad \mbox{and} \quad
  | \Delta\hat{u} | = | u(\hat{z})-u(\hat{y})| \leq \beta_u\left \|\hat{z}-\hat{y}\right \|  
  \,.  \]
That leaves us with the second factors, which are linear in $t$ such that 
\begin{eqnarray*}
|\tilde u(t)- \cu(t) |  & \leq  & \max( |\tilde u(0)- \cu(0) |, |\tilde u(1)- \cu(1) |)   \\
& = & \max(| \lozenge_{\cy}^{\hy} u(x) - u(\cy) |, |\lozenge_{\cz}^{\hz} u(x) - u(\cz)| ) \\ 
&= &  \max(| \lozenge_{\cy}^{\hy} u(x) -   \lozenge_{\cy}^{\hy} u(\cy) |, |\lozenge_{\cz}^{\hz} u(x) -  \lozenge_{\cz}^{\hz} u(\cz)| )  \\
& \leq &    \beta_u \max(\| x - \cy\|, \| x - \cz\| )  \,,
\end{eqnarray*}
where the last inequality follows from (i). Analogously, we can derive
$$ |\tilde u(t)- \hu(t) | \leq  \beta_u \max(\| x - \hy\|, \| x - \hz\| )\;. $$
Substituting this into  \eqref{firstbound} we get
$$ \left |  \Delta \tilde v  \right |  \;  \leq \; 
\gamma_v  \max(\left \|\cz-\cy\right \|
\| x - \hy\|, \left \|\cz-\cy\right \| \| x - \hz\|, \left \|\hat{z}-\hat{y}\right \| 
\| x - \cy\|, \left \|\hat{z}-\hat{y}\right \|  
\| x - \cz\|   )\, ,   $$
with $\gamma_v \equiv  L_K(\varphi) \gamma_u + L_K(\varphi^\prime ) \beta_u^2 $, which completes the proof of $(iii)$. 
Finally, we have to prove $(iv)$, which gives a bound on the increment part only. Setting $\xi\equiv\tfrac{1}{2}(\rz+\ry)+\Delta x$ one gets, with 
the results already proved and a few triangle inequalities, that 
\begin{align*}
&\lVert\Delta_{\cz}^{\hz} F(\Delta x)-\Delta_{\cy}^{\hy} F(\Delta x)\rVert \\
=\;&\lVert\lozenge_{\cz}^{\hz}F(\rz+\Delta x)-F(\rz)-(\lozenge_{\cy}^{\hy}F(\ry+\Delta x)-F(\ry))\rVert \\
=\;&\lVert \lozenge_{\cz}^{\hz}F(\xi)-F(\rz)+\lozenge_{\cz}^{\hz}F(\rz+\Delta x) - \lozenge_{\cz}^{\hz}F(\xi) \\&- \lozenge_{\cy}^{\hy}F(\xi)+F(\ry)-\lozenge_{\cy}^{\hy}F(\ry+\Delta x) + \lozenge_{\cy}^{\hy}F(\xi)\rVert \\
\leq \;&\beta_F\lVert\rz-\ry\rVert +\beta_F\lVert\rz+\Delta x-\tfrac{1}{2}(\rz+\ry)-\Delta x\rVert+\beta_F\lVert\ry+\Delta x-\tfrac{1}{2}(\rz+\ry)-\Delta x\rVert\\
&+\gamma_F\max[\lVert\cz-\cy\rVert\max(\lVert\xi-\hz\rVert,\lVert\xi-\hy\rVert), \lVert\hz-\hy\rVert\max(\lVert\xi-\cz\rVert,\lVert\xi-\cy\rVert)]\\
=\;&2\beta_F\lVert\rz-\ry\rVert+\gamma_F\max[\lVert\cz-\cy\rVert\max(\lVert\xi-\hz\rVert,\lVert\xi-\hy\rVert),\lVert\hz-\hy\rVert\max(\lVert\xi-\cz\rVert,\lVert\xi-\cy\rVert)] \,.
\end{align*} 
Now, since for example 
$$\| \tfrac{1}{2} (\ry +\rz) - \cz\| = \| \tfrac{1}{2} (\ry - \rz) - \cz + \tfrac{1}{2} 
(\cz+\hz) \| \leq   \tfrac{1}{2}  (\| \ry - \rz \| +  \|\Delta z\|)\, , $$
both inner maxima can be bounded by the same expression, namely 
$$ \|\Delta x\| +    \tfrac{1}{2} (\|\rz-\ry\| +\max( \|\Delta z\|, \|\Delta y\| ))\, , $$ 
so that we obtain the upper bound 
\begin{align*} 
 \; & \lVert\Delta_{\cz}^{\hz} F(\Delta x)-\Delta_{\cy}^{\hy} F(\Delta x)\rVert \\
 \leq \; &  2\beta_F\lVert\rz-\ry\rVert + \gamma_F  \max(\lVert\cz-\cy\rVert, \lVert\hz-\hy\rVert) \left [ \|\Delta x\| +    \tfrac{1}{2} (\|\rz-\ry\| +\max( \|\Delta z\|, \|\Delta y\| ))\right ] \,.
\end{align*}
Finally, we can also bound 
$$\| \cy - \cz  \|   \; = \| \cy - \ry - \cz + \rz + \ry - \rz\| \leq \|\ry - \rz\| +   \tfrac{1}{2}(\|\Delta y\|+ \|\Delta x\|)  \,,  $$
which yields the assertion after some elementary modifications. 
>From the secant result we can easily get the bound for the tangent model by
setting \(\cz=\hz=\rz\) and \(\cy=\hy=\ry\).
\end{proof} 
{}
{}
As one can see by setting $\cy=x=\cz$ the assertion $(ii)$ almost follows from $(iii)$, except that  a factor of 2 is lost in the constants. 
The proposition also states  that the values of $F$  at $\cx$ and $\hx$ are reproduced exactly by our approximation as one would expect from a secant approximation. 
This property clearly nails down the piecewise linearization rules \eqref{:absdiff} and \eqref{:linphi} with \eqref{:cdef} for all  univariate functions.  
Also, there is no doubt that addition and subtraction should be linearized according to \eqref{:linadd} and that  multiplications $v_i =c \,v_j$ by constants
$c$ should yield the differentiated version $\Delta v_i= c\, \Delta v_j$, which is a special case of  \eqref{:linmult}. 
For general multiplications  $v_i = v_j \ast v_k$ the two values $\cv_i=  \cv_j \ast \cv_k$
and $\hv_i= \hv_j \ast \hv_k$ could also be interpolated by  linear functions other than the one defined by \eqref{:linmult}.
However, we currently see no possible gain in that flexibility,
and maintaing the usual product rule form seems rather attractive.

\subsection*{Stable Surjectivity}

Recall that a continuous function $F:\mathbb R^n\rightarrow\mathbb R^m$ is called proper if the preimage of every compact set is compact. 
One can easily see that any piecewise linear function is proper if and only if it maps no affine ray $\{a+\lambda b\; :\; \lambda\ge 0 \}, (a, b\in\mathbb R^n),$ to a point. 
This is trivially the case if $n=m$ and the Jacobians of all selection functions of $F$ are invertible.
In \cite{degree} it was proved that if a piecewise linear function $F:\mathbb R^n\rightarrow\mathbb R^n$ is proper, there exists a $d\in\mathbb Z$ such that for all regular values
$y$ of $F$, i.e. all values $y$ such that the Jacobian at all preimages of $y$ exists and is invertible [which is trivially the case if $F^{-1}(y)=\emptyset$], it holds
$$d\ =\ \sum_{x\in F^{-1}(y)}\operatorname{sign}[\operatorname{det}(D_xF)]\ =:\ \operatorname{deg(F)}\; .$$
Hence, any regular value $y$ of a proper piecewise linear function $F:\mathbb R^n\rightarrow\mathbb R^n$ has at least one preimage if $\operatorname{deg}(F)\ne 0$. This implies surjectivity of $F$ by the closedness of piecewise linear functions (cf. \cite{scholtespl}) and the well known fact that regular values lie dense in the range. We call $\operatorname{deg}(F)$ the degree of $F$.

\begin{lemma}(\cite[Cor. 5.2]{degree}) \label{degDistance}
 Assume a piecewise linear function $F:\R^n\rightarrow\R^n$ is composed of $k$ affine functions $F_i$ with invertible linear parts $A_i$ and define 
 $$\rho_F\ :=\ \min\left\{\frac{1}{\Vert A^{-1}_1\Vert},\dots, \frac{1}{\Vert A^{-1}_k\Vert}\right\}\; .$$
 Moreover, let $0< \varepsilon<\rho_F$ and $l\ge 0$. Then all piecewise linear functions $G:\R^n\rightarrow\R^n$ with
 $$\Vert F(x)-G(x)\Vert\ \le\ \varepsilon\Vert x\Vert +l\qquad\forall x\in\R^n$$
are proper and their degree equals that of $F$.  
\end{lemma}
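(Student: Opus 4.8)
The plan is to reduce the whole statement to a single coercivity estimate for $F$ and then to invoke homotopy invariance of the piecewise linear degree as developed in \cite{degree}. First I would establish the lower bound
$$\|F(x)\|\ \ge\ \rho_F\,\|x\| - b, \qquad b:=\max_{1\le i\le k}\|b_i\|,$$
where $F_i(x)=A_ix+b_i$ are the affine selection functions of $F$. This is immediate: any $x$ lies in the closure of some selection domain, there $F$ agrees with some $F_i$, and from $\|x\|=\|A_i^{-1}(A_ix)\|\le\|A_i^{-1}\|\,\|A_ix\|$ one obtains $\|A_ix\|\ge\|x\|/\|A_i^{-1}\|\ge\rho_F\|x\|$, whence $\|F(x)\|=\|A_ix+b_i\|\ge\rho_F\|x\|-b$.

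Next I would introduce the straight-line homotopy $H_t:=(1-t)F+tG$, $t\in[0,1]$. Each $H_t$ is piecewise linear and $\|H_t(x)-F(x)\|=t\,\|G(x)-F(x)\|\le\varepsilon\|x\|+l$, so the bound above gives
$$\|H_t(x)\|\ \ge\ (\rho_F-\varepsilon)\,\|x\| - (b+l)\qquad\text{for all }x\in\R^n,\ t\in[0,1].$$
Since $\rho_F-\varepsilon>0$ by hypothesis, this forces $\|H_t(x)\|\to\infty$ as $\|x\|\to\infty$, uniformly in $t$. For a continuous map on $\R^n$ the preimage of a compact set is closed, so coercivity makes it bounded and hence compact; thus every $H_t$ — in particular $G=H_1$ — is proper and has a well-defined degree by the discussion preceding the lemma.

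Finally I would conclude $\deg(F)=\deg(G)$ from homotopy invariance. The map $H(x,t):=H_t(x)$ is piecewise linear, and the estimate just obtained shows the homotopy is proper: for fixed $y$ the set $\{(x,t):H_t(x)=y\}$ satisfies $\|x\|\le(\|y\|+b+l)/(\rho_F-\varepsilon)$ with $t\in[0,1]$, hence is compact. Choosing $y$ to be a common regular value of $F$ and of $G$ — such $y$ lie dense — and generic for $H$, this set is a compact piecewise linear $1$-manifold with boundary $\big(F^{-1}(y)\times\{0\}\big)\cup\big(G^{-1}(y)\times\{1\}\big)$; following the induced orientation along each connected component (an arc contributes opposite signs at its two endpoints, a closed loop nothing) yields $\sum_{x\in F^{-1}(y)}\operatorname{sign}\operatorname{det}(D_xF)=\sum_{x\in G^{-1}(y)}\operatorname{sign}\operatorname{det}(D_xG)$, that is $\deg(F)=\deg(G)$.

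The main obstacle is the last step: making the compact ``$1$-manifold with boundary'' picture and the orientation bookkeeping rigorous in the non-smooth category — genericity and density of admissible values $y$, exclusion of degenerate intersections, and the identification of the boundary. This is exactly the degree-theoretic machinery carried out up to Cor.~5.2 of \cite{degree}, so in the paper I would simply cite it; the only input specific to the present setting is the elementary coercivity estimate of the first two paragraphs, which is what converts the hypothesis $\varepsilon<\rho_F$ into uniform properness of the homotopy.
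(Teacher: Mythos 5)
The paper offers no proof of this lemma: it is imported verbatim as Corollary 5.2 of \cite{degree}, and the surrounding text only sets up the notion of degree needed to state it. So there is nothing in-paper to compare against, and your write-up is a reconstruction of the argument that the cited reference carries out. As such it is sound. The coercivity estimate $\Vert F(x)\Vert\ge\rho_F\Vert x\Vert-b$ is correct (at every $x$ the value $F(x)$ coincides with some selection $A_ix+b_i$, and $\Vert A_ix\Vert\ge\Vert x\Vert/\Vert A_i^{-1}\Vert\ge\rho_F\Vert x\Vert$), and you correctly derive from it the \emph{uniform} lower bound $\Vert H_t(x)\Vert\ge(\rho_F-\varepsilon)\Vert x\Vert-(b+l)$ along the linear homotopy, which is what makes the homotopy proper as a map on $\R^n\times[0,1]$ rather than merely slice-wise proper --- that is the hypothesis homotopy invariance of the degree actually requires, and it is the precise point where $\varepsilon<\rho_F$ enters. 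The only part that remains a sketch is the homotopy-invariance step itself (density and genericity of admissible values $y$ in the piecewise linear category, the one-manifold structure of the solution set, the orientation bookkeeping); you flag this honestly and defer to \cite{degree}. Be careful with the citation there: since the statement being proved \emph{is} Cor.~5.2 of \cite{degree}, citing that corollary for the final step would be circular --- you should instead invoke the homotopy-invariance theorem established earlier in that reference, keeping your coercivity computation as the only new input. With that caveat, your argument is a legitimate expansion of what the paper leaves entirely to the citation.
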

The latter statement enables us to prove another stability result:
\begin{proposition}\label{degConstRadius}
Let \(F\in\text{span}(\Phi)\) where \(F:\R^n\to\R^n\) and assume a piecewise linearization $\lozenge_{\rx} F:\R^n\rightarrow\R^n$ is composed of $k$ affine functions $F_i$ with invertible linear parts $A_i$. Moreover, define $\rho_F$ as in Lemma \ref{degDistance}. Then for all tangent and secant mode piecewise linearizations with development points in the ball $B(\rx, \rho_F/\gamma_F)$, where $\gamma_F$ is defined as in Proposition \ref{prop:approx}, the mapping degree is well defined and equals that of $\lozenge_{\rx} F$. 
\end{proposition}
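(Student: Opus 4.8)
The plan is to reduce Proposition~\ref{degConstRadius} to Lemma~\ref{degDistance} by verifying its hypothesis for $G$ equal to an arbitrary tangent or secant piecewise linearization of $F$ developed at points inside the stated ball. Fix such a $G$; in the secant case it is $\lozenge_{\cy}^{\hy}F$ for some $\cy,\hy\in B(\rx,\rho_F/\gamma_F)$, and in the tangent case it is $\lozenge_{\ry}F$ with $\ry$ in that ball, which is the special case $\cy=\hy=\ry$. We must produce constants $\varepsilon<\rho_F$ and $l\ge 0$ with $\|\lozenge_{\rx}F(x)-G(x)\|\le\varepsilon\|x\|+l$ for all $x\in\R^n$; then Lemma~\ref{degDistance}, applied with $F$ there taken to be our $\lozenge_{\rx}F$ and with the constants $A_i,\rho_F$ of its linear pieces, gives that $G$ is proper and $\deg(G)=\deg(\lozenge_{\rx}F)$, which is exactly the claim.

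First I would invoke part $(iii)$ of Proposition~\ref{prop:approx} with $\cz=\hz=\rx$ and $(\cy,\hy)$ the development points of $G$. This yields, for every $x\in\R^n$,
\[
\|\lozenge_{\rx}F(x)-\lozenge_{\cy}^{\hy}F(x)\|\;\le\;\gamma_F\max\bigl[\|\rx-\hy\|\max(\|x-\cy\|,\|x-\rx\|),\;\|\rx-\cy\|\max(\|x-\hy\|,\|x-\rx\|)\bigr].
\]
Each of the factors $\|x-\cy\|$, $\|x-\hy\|$, $\|x-\rx\|$ is bounded by $\|x\|+R$, where $R\equiv\max(\|\cy\|,\|\hy\|,\|\rx\|)$ is a finite constant independent of $x$. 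Writing $r\equiv\max(\|\rx-\cy\|,\|\rx-\hy\|)$ for the (maximal) distance from $\rx$ to the development points, we obtain the affine-in-$\|x\|$ estimate
\[
\|\lozenge_{\rx}F(x)-G(x)\|\;\le\;\gamma_F\, r\,(\|x\|+R)\;=\;(\gamma_F r)\,\|x\|+\gamma_F r R .
\]
Now the hypothesis $\cy,\hy\in B(\rx,\rho_F/\gamma_F)$ gives $r<\rho_F/\gamma_F$, hence $\varepsilon\equiv\gamma_F r<\rho_F$, and we may take $l\equiv\gamma_F r R\ge 0$. This is precisely the perturbation bound required by Lemma~\ref{degDistance}.

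The only genuinely delicate points are bookkeeping rather than conceptual: one must note that $\gamma_F>0$ is needed for the ball to make sense (if $\gamma_F=0$ then $F$ is affine and all its linearizations coincide with it, so the statement is trivial), that the secant bound of $(iii)$ legitimately specializes as claimed when one argument pair collapses to $\rx$ (stated at the end of the proof of Proposition~\ref{prop:approx}), and that $\lozenge_{\rx}F$ itself is genuinely piecewise linear with the $k$ invertible affine pieces $F_i$ — this is part of the hypothesis, so no work is needed there. The strict inequality $r<\rho_F/\gamma_F$ coming from the \emph{open} ball is exactly what makes $\varepsilon<\rho_F$ strict, matching Lemma~\ref{degDistance}. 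I expect the main obstacle, such as it is, to be merely making the reduction clean: choosing which linearization plays the role of ``$F$'' and which the role of ``$G$'' in Lemma~\ref{degDistance}, and checking that the constant $R$ (and hence $l$) can be chosen uniformly, which it can since for any fixed development points it is a fixed number and Lemma~\ref{degDistance} permits an arbitrary additive constant $l\ge 0$.
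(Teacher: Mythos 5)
Your proposal is correct and follows exactly the paper's (one-line) proof, which simply plugs Proposition~\ref{prop:approx}$(iii)$ with $\cz=\hz=\rx$ into Lemma~\ref{degDistance}; you have merely spelled out the specialization and the resulting affine bound $\varepsilon\|x\|+l$ with $\varepsilon=\gamma_F r<\rho_F$ in full detail. No discrepancy with the paper's argument.
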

\begin{proof}
Just plug Proposition \ref{prop:approx}. $(iii)$ into Lemma \ref{degDistance}.
\end{proof}
The fact that the mapping degree of a coherently oriented piecewise linear function cannot be zero yields a rather pointed statement:
\begin{corollary}
In the situation of Proposition \ref{degConstRadius}, if $\lozenge_{\rx} F$ is coherently oriented, then all tangent and secant mode piecewise linearizations with development points in the ball $B(\rx, \rho_F/\gamma_F)$ are surjective. 
\end{corollary}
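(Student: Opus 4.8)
The plan is to chain together Proposition~\ref{degConstRadius} with the fact, recalled just before Lemma~\ref{degDistance}, that a proper piecewise linear function $F:\R^n\to\R^n$ with nonzero degree is surjective. First I would invoke Proposition~\ref{degConstRadius}: under its hypotheses, every tangent or secant mode piecewise linearization $G$ with development point (or point pair) inside $B(\rx,\rho_F/\gamma_F)$ is proper and satisfies $\operatorname{deg}(G)=\operatorname{deg}(\lozenge_{\rx}F)$. So the whole corollary reduces to showing $\operatorname{deg}(\lozenge_{\rx}F)\neq 0$ whenever $\lozenge_{\rx}F$ is coherently oriented.

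The second step is exactly the ``rather pointed statement'' flagged in the text: coherent orientation of a proper piecewise linear function forces its degree to be nonzero. Recall that coherently oriented means all the selection-function Jacobians $A_i$ have determinants of the same sign, say all positive (the negative case is symmetric). Then for any regular value $y$, the degree formula
\[
\operatorname{deg}(\lozenge_{\rx}F)\;=\;\sum_{x\in (\lozenge_{\rx}F)^{-1}(y)}\operatorname{sign}[\operatorname{det}(D_x\lozenge_{\rx}F)]
\]
is a sum of terms all equal to $+1$. Since a proper piecewise linear map with nonzero degree is surjective and, conversely, one needs at least to know the preimage of a regular value is nonempty to conclude the degree is a positive integer, I would argue as follows: properness plus coherent orientation is known to imply that $\lozenge_{\rx}F$ is in fact a bijection (this is the classical result of Rheinboldt, or can be read off from the cited degree theory in \cite{degree}), hence every regular value has exactly one preimage and $\operatorname{deg}(\lozenge_{\rx}F)=\pm 1\neq 0$. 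Here I am using that $\lozenge_{\rx}F$ is itself proper, which follows from its pieces having invertible linear parts (no affine ray is mapped to a point), as observed in the text.

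With $\operatorname{deg}(\lozenge_{\rx}F)=\pm1$ established, the conclusion is immediate: by Proposition~\ref{degConstRadius} each piecewise linearization $G$ in the ball has $\operatorname{deg}(G)=\operatorname{deg}(\lozenge_{\rx}F)\neq 0$ and is proper, so by the argument recalled before Lemma~\ref{degDistance}---nonzero degree gives every regular value a preimage, regular values are dense in the range, and piecewise linear functions are closed (cf.\ \cite{scholtespl})---$G$ is surjective. The main obstacle, such as it is, is purely expository: making precise that coherent orientation of a proper piecewise linear self-map yields nonzero (indeed unit) degree, rather than citing it as folklore. Everything else is a direct substitution into results already proved in the excerpt.
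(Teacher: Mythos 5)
Your overall route is exactly the one the paper intends: the corollary is stated without proof as an immediate consequence of Proposition~\ref{degConstRadius} together with the remark that a coherently oriented piecewise linear function cannot have degree zero, and you correctly reduce everything to that one fact plus the surjectivity criterion (proper, nonzero degree, density of regular values, closedness) recalled before Lemma~\ref{degDistance}.

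There is, however, one step in your justification that would fail as written: the claim that properness plus coherent orientation implies that $\lozenge_{\rx}F$ is a bijection, so that its degree is $\pm1$. This is false for $n\ge 2$. A standard counterexample is the piecewise linear ``angle-doubling'' map of the plane: subdivide $\R^2$ into six sectors of angle $60^\circ$ and map each one linearly and orientation-preservingly onto a sector of angle $120^\circ$, wrapping around the origin twice. This map is continuous, piecewise linear, proper, coherently oriented (all selection Jacobians have positive determinant), and surjective, but it is two-to-one away from the origin and has degree $2$. So coherent orientation bounds the degree away from zero but does not force it to be $\pm 1$, and there is no classical theorem asserting bijectivity from these hypotheses alone. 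The repair is short and stays entirely within what the excerpt provides: since the linear parts $A_i$ of the selection functions of $\lozenge_{\rx}F$ are invertible, each piece maps its (full-dimensional) polyhedron onto a full-dimensional polyhedron, so the image of $\lozenge_{\rx}F$ contains a nonempty open set; regular values are dense, so some regular value $y$ has a nonempty preimage, and the degree formula then gives
\[
\operatorname{deg}(\lozenge_{\rx}F)\;=\;\sum_{x\in(\lozenge_{\rx}F)^{-1}(y)}\operatorname{sign}\bigl[\operatorname{det}(D_x\lozenge_{\rx}F)\bigr]\;=\;\pm\,\#(\lozenge_{\rx}F)^{-1}(y)\;\neq\;0,
\]
because coherent orientation makes all summands equal. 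With this nonvanishing established, your final paragraph goes through unchanged and yields the surjectivity of every linearization with development points in $B(\rx,\rho_F/\gamma_F)$.
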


\section{Generalized Newton Methods by Piecewise Linearization}

We will proceed by proposing and analyzing a possible application of the 
tangent and secant approximations developed in the previous sections. 
For this we present generalized Newton's methods for 
composite piecewise smooth functions \(F:\R^n\to\R^n\) based on the piecewise linear 
approximations, both for the tangent and the secant mode. The merit of these methods 
is the fact that they impose no strong differentiability requirements but require 
only piecewise differentiability at the root in question. 

\begin{definition}[Newton operator]\label{def1}
Let $F\in {\rm span}(\Phi)$ and $x^\ast$ be an isolated root of $F$ 
in an open neighborhood ${\cal D}$. The Newton step for $F$
is definable on ${\cal D}$ in tangent or secant mode if the piecewise linear equation
$\lozenge_{\rx} F(x)=0$ resp. $\lozenge_{\cx}^{\hx} F(x)=0$ has at least 
one root for all $\rx\in \mathcal D$ resp. $\cx,\hx\in {\cal D}$.

Then the Newton operator is defined in tangent mode as 
$$
N(\rx)=\arg\min\{\|x-\rx\|:\lozenge_{\rx} F(x)=0\}
$$
and in secant mode as
$$
N(\hx, \cx) = \arg\min\{\|x-\tfrac 12(\hx + \cx)\|:\lozenge_{\cx}^{\hx} F(x)=0\}\; .
$$
\end{definition}
Definition \ref{def1} is the minimal assumption under which we can conclude, using the approximation and Lipschitz continuity results of Section \ref{secApprox}, that the iteration 
\begin{align}\label{NewtonDef}
x_{k+1} &= N(x_k) \quad \text{(tangent)} & x_{k+1} &= N(x_k, x_{k-1}) \quad \text{(secant)}
\end{align}
 converges locally to an isolated root $x^\ast$.
 Assuming that the piecewise linearization $\lozenge_{x^\ast}F$ 
is bijective on a ball ${\cal D}=B(x^\ast,\bar R)$ we will proceed to show that close to $x^\ast$ 
\begin{itemize}
 \item the Newton step can be defined
 \item the Newton step stays close to $x^\ast$
 \item the tangent mode Newton method converges quadratically
 and the secant mode Newton method converges with the golden mean as order.
\end{itemize}

%
%
%
We know that the assumption of bijectivity on a ball can be relaxed through the use of degree theory, but this requires a significant technical effort for which we refer to subsequent works. Note that the minimal assumption for the local convergence of semismooth Newton \cite{semismooth} is similar, namely, that there always exists a generalized Jacobian $J(x)$ that has a uniformly bounded inverse over all $x \in B(x^\ast, \bar\rho)$. 

For most of the following results up to the last it is sufficient to consider 
the secant mode piecewise linear approximation of $F$, as results for the tangent 
mode can be obtained by setting $\cx=\hx=\rx$.

The general bound for the difference of piecewise linearizations with different
basis points 
$\cz,\hz$ and $\cy=\hy=\ry$
can be refined to a closer bound in the case that one is to express that bound only in 
distances to $\ry$.
\begin{lemma}\label{refinement}
  Let the second order constant $\gamma_F$ be valid on some convex set $U$ and
$x,\ry,\cz,\hz\in U$.
  Then 
  $$
  \|\lozenge_{\cz}^{\hz} F(x)-\lozenge_{\ry} F(x)\|
  \le \gamma_F
\left[ \max(\|\hz-\ry\|,\|\cz-\ry\|)\cdot\|x-\ry\|\; +\; \frac12\|\hz-\ry\|\,\|\cz-\ry\| \right].
  $$
\end{lemma}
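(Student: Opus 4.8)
The plan is to re-run the inductive proof of Proposition~\ref{prop:approx}$(iii)$, which simplifies substantially once $\cy=\hy=\ry$: at each smooth node the comparison of the two linearizations collapses to a comparison of the \emph{secant line} and the \emph{tangent line} of one univariate $\varphi$, and this one-dimensional estimate is where the sharp factor $\tfrac12$ is produced. As usual I first reduce to a scalar function $f$, so the range norm becomes $|\cdot|$. The one-dimensional core is: if $\psi\in C^2$ with $|\psi''|\le L$ on an interval containing $a,b,r$, if $\ell^{\mathrm{sec}}$ denotes the line through $(a,\psi(a))$ and $(b,\psi(b))$ (read as the tangent at $a$ when $a=b$) and $\ell^{\mathrm{tan}}$ the tangent of $\psi$ at $r$, then for every $\xi$
\[
\bigl|\ell^{\mathrm{sec}}(\xi)-\ell^{\mathrm{tan}}(\xi)\bigr|\ \le\ L\bigl[\max(|a-r|,|b-r|)\,|\xi-r|+\tfrac12|a-r|\,|b-r|\bigr].
\]
Indeed $\ell^{\mathrm{sec}}-\ell^{\mathrm{tan}}$ is affine in $\xi$; its value at $\xi=r$ is $\ell^{\mathrm{sec}}(r)-\psi(r)$, the Lagrange remainder $-\tfrac12\psi''(\eta)(r-a)(r-b)$ of linear interpolation of $\psi$ at $r$ with nodes $a,b$ — valid whether $r$ lies interior to $[a,b]$ or is an extrapolation point — which is at most $\tfrac12 L|a-r|\,|b-r|$; and its slope is $\tfrac{\psi(b)-\psi(a)}{b-a}-\psi'(r)=\psi'(\mu)-\psi'(r)$ for some $\mu$ between $a$ and $b$, hence at most $L|\mu-r|\le L\max(|a-r|,|b-r|)$ in modulus.

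Next I induct over the intermediates $v$ with the hypothesis that $\bigl|\lozenge_{\cz}^{\hz}v(x)-\lozenge_{\ry}v(x)\bigr|$ is at most $\gamma_v$ times the bracket appearing in the Lemma, where $\gamma_v$ satisfies the recurrences of Proposition~\ref{prop:approx} ($\gamma=0$ at inputs, $\gamma_v=\gamma_u+\gamma_w$ at additions, $\gamma_v=\gamma_u$ through $v=|u|$, and $\gamma_v=L_U(\varphi)\gamma_u+L_U(\varphi')\beta_u^2$ at $v=\varphi(u)$). Inputs are trivial, additions use the triangle inequality, and for $v=|u|$ both models carry the modulus through unchanged, so $\bigl|\lozenge_{\cz}^{\hz}|u|(x)-\lozenge_{\ry}|u|(x)\bigr|=\bigl|\,|\lozenge_{\cz}^{\hz}u(x)|-|\lozenge_{\ry}u(x)|\,\bigr|\le\bigl|\lozenge_{\cz}^{\hz}u(x)-\lozenge_{\ry}u(x)\bigr|$. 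The smooth node is the only substantial case. By the defining relations, $\lozenge_{\cz}^{\hz}\varphi(u)(x)=\ell^{\mathrm{sec}}\!\bigl(\lozenge_{\cz}^{\hz}u(x)\bigr)$ with $\ell^{\mathrm{sec}}$ the secant line of $\varphi$ through $\bigl(u(\cz),\varphi(u(\cz))\bigr)$ and $\bigl(u(\hz),\varphi(u(\hz))\bigr)$, while $\lozenge_{\ry}\varphi(u)(x)=\ell^{\mathrm{tan}}\!\bigl(\lozenge_{\ry}u(x)\bigr)$ with $\ell^{\mathrm{tan}}$ the tangent at $u(\ry)$. Splitting
\[
\lozenge_{\cz}^{\hz}\varphi(u)(x)-\lozenge_{\ry}\varphi(u)(x)=\Bigl[\ell^{\mathrm{sec}}\!\bigl(\lozenge_{\cz}^{\hz}u(x)\bigr)-\ell^{\mathrm{sec}}\!\bigl(\lozenge_{\ry}u(x)\bigr)\Bigr]+\Bigl[\ell^{\mathrm{sec}}\!\bigl(\lozenge_{\ry}u(x)\bigr)-\ell^{\mathrm{tan}}\!\bigl(\lozenge_{\ry}u(x)\bigr)\Bigr],
\]
the first bracket equals $c_{ij}\bigl(\lozenge_{\cz}^{\hz}u(x)-\lozenge_{\ry}u(x)\bigr)$ with $|c_{ij}|\le L_U(\varphi)$ and so is bounded, by the induction hypothesis for $u$, by $L_U(\varphi)\gamma_u$ times the bracket; the second bracket is bounded by the one-dimensional estimate with $a=u(\cz)$, $b=u(\hz)$, $r=u(\ry)$, $\xi=\lozenge_{\ry}u(x)$ and $L=L_U(\varphi')$, where $|u(\cz)-u(\ry)|\le\beta_u\|\cz-\ry\|$, $|u(\hz)-u(\ry)|\le\beta_u\|\hz-\ry\|$ and $|\lozenge_{\ry}u(x)-u(\ry)|=|\lozenge_{\ry}u(x)-\lozenge_{\ry}u(\ry)|\le\beta_u\|x-\ry\|$ by Proposition~\ref{prop:approx}$(i)$; this gives $L_U(\varphi')\beta_u^2$ times the bracket. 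Adding produces exactly $\gamma_v=L_U(\varphi)\gamma_u+L_U(\varphi')\beta_u^2$, so $\gamma_f=\gamma_F$ and the estimate follows, componentwise and in the $\infty$-norm when $m>1$.

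The main obstacle is conceptual rather than computational: one must notice that every univariate elemental acts on the \emph{linearized} argument precisely through its own secant or tangent line, which cleanly decouples the estimate into a propagated-error term ($L_U(\varphi)\gamma_u$) and a per-node one-dimensional interpolation error ($L_U(\varphi')\beta_u^2$). Once this is seen, the sharp $\tfrac12$ is automatic from the Lagrange remainder, in which the two node distances $|u(\cz)-u(\ry)|$ and $|u(\hz)-u(\ry)|$ enter multiplicatively; bounding the corresponding middle factors by an endpoint maximum, as in the proof of Proposition~\ref{prop:approx}$(iii)$, would only recover the weaker coefficient $1$.
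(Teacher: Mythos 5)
Your proof is correct, but it follows a genuinely different route from the paper. The paper obtains the refined constant $\tfrac12$ by a homotopy--telescoping argument: it subdivides the segments $[\ry,\cz]$ and $[\ry,\hz]$ into $N$ pieces, applies the already-proven Proposition~\ref{prop:approx}$(iii)$ to each consecutive pair of development-point pairs, and lets $N\to\infty$, so that the Riemann sum $\sum_{k}\tfrac{k+1}{N^2}\to\tfrac12$ produces the sharp coefficient; the computational graph is never re-entered and $(iii)$ is used purely as a black box. You instead re-run the induction over the graph with $\cy=\hy=\ry$, isolating at each smooth node the affine difference of a secant line and a tangent line of the same univariate $\varphi$, where the $\tfrac12$ falls out of the Lagrange interpolation remainder evaluated at $r=u(\ry)$ and the slope discrepancy is controlled by the mean value theorem. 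Both arguments are sound and yield the identical constant with the identical recurrence $\gamma_v=L(\varphi)\gamma_u+L(\varphi')\beta_u^2$; the paper's version is shorter once $(iii)$ is in hand and generalizes mechanically to any statement one can telescope, while yours is self-contained at the node level, explains structurally \emph{where} the factor $\tfrac12$ originates, and avoids the limit process. Your handling of the degenerate case $u(\cz)=u(\hz)$ (secant slope replaced by $\varphi'(\ru)$ per \eqref{:cdef}), of the evaluation point $\xi$ possibly lying outside $[a,b]$, and your implicit reliance on the paper's reduction of multiplications via the Appolonius identity are all consistent with the framework, so I see no gap.
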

\begin{proof}
 Select some $N\in \mathbb N$ and consider the subdivision of the segments $[\cz,\ry]$ and
$[\hz,\ry]$
 by $\cu_k=\ry+\frac kN (\cz-\ry)$ and $\hu_k=\ry+\frac kN (\hz-\ry)$.
 Then by Prop. \ref{prop:approx}.$(iii)$ 
 \begin{align*}
  \|\lozenge_{\cu_{k+1}}^{\hu_{k+1}} F(x)-\lozenge_{\cu_{k}}^{\hu_{k}} F(x)\|
  &\le \gamma_F\max\left[\;\begin{gathered}
    \|\hu_{k+1}-\hu_k\|\,\max(\|x-\cu_{k+1}\|,\|x-\cu_k\|)\\
    \|\cu_{k+1}-\cu_k\|\,\max(\|x-\hu_{k+1}\|,\|x-\hu_k\|)            
    \end{gathered}\;\right]
  \notag\\
  &\le\gamma_F \max\left[\;\begin{gathered}
    \tfrac1N\|\hz-\ry\|\,\bigl(\|x-\ry\|+\tfrac{k+1}N\|\cz-\ry\|\bigr)\\
    \tfrac1N\|\cz-\ry\|\,\bigl(\|x-\ry\|+\tfrac{k+1}N\|\hz-\ry\|\bigr)            
    \end{gathered}\;\right]
  \notag\\
  &=\gamma_F\big[\tfrac1N\max(\|\hz-\ry\|,\|\cz-\ry\|)\|x-\ry\|+\tfrac{k+1}{N^2}\|\hz-\ry\|\|\cz-\ry\|\big]\; .
 \end{align*}
 Summation and limit $N\to\infty$ results in the claim.
\end{proof}

We assume hereafter that  $\lozenge_{x^\ast}F$ 
is bijective on a ball ${\cal D}=B(x^\ast,\bar R)$, where $x^\ast$ is an isolated root of $F$, which implies that it is also coherently oriented
and metrically
regular on the latter. This means that there exists a constant $c>0$ such that 
for all $x\in {\cal D}$, $y\in\lozenge_{x^\ast}F({\cal D})$
$$
\left\|x-\left(\lozenge_{x^\ast}F\mid_{\cal
D}\right)^{-1}\left(y\right)\right\|\le c\left\|\lozenge_{x^\ast}F(x)-y\right\|.
$$

\begin{lemma}[existence of roots of the PL approximation]\label{rootEx}
 If $R\le \min(\bar R,\frac1{c\gamma_F})$ and $\rho=\frac{R}3$ then for any $\cx,\hx\in
B(x^\ast,\rho)$
 the piecewise linearization $\lozenge_{\cx}^{\hx}F$ has a root in $B(x^\ast,R)$.
\end{lemma}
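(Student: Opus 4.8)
The plan is to recast the equation $\lozenge_{\cx}^{\hx}F(x)=0$ as a fixed-point problem for a continuous self-map of the closed ball $\overline{B(x^\ast,R)}$ and to invoke Brouwer's theorem. Let $g:=(\lozenge_{x^\ast}F\mid_{\mathcal D})^{-1}$ be the inverse of the bijective piecewise linearization. Since $x^\ast$ is a root of $F$ we have $\lozenge_{x^\ast}F(x^\ast)=F(x^\ast)+\Delta F(x^\ast;0)=0$, hence $g(0)=x^\ast$. Define, for $x\in\overline{B(x^\ast,R)}$,
$$\Phi(x)\;:=\;g\bigl(\lozenge_{x^\ast}F(x)-\lozenge_{\cx}^{\hx}F(x)\bigr).$$
Because $\lozenge_{x^\ast}F$ is injective on $\mathcal D$, the identity $\Phi(x)=x$ is equivalent to $\lozenge_{x^\ast}F(x)=\lozenge_{x^\ast}F(x)-\lozenge_{\cx}^{\hx}F(x)$, that is, to $\lozenge_{\cx}^{\hx}F(x)=0$; so any fixed point of $\Phi$ lying in $B(x^\ast,R)$ is exactly the root we seek.

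Next I would bound the argument fed into $g$. Applying Lemma \ref{refinement} with $\ry=x^\ast$, $\cz=\cx$, $\hz=\hx$ (all lying in a convex set on which $\gamma_F$ is valid, e.g. $\overline{B(x^\ast,\bar R)}$), and using $\|x-x^\ast\|\le R$ together with $\|\cx-x^\ast\|,\|\hx-x^\ast\|\le\rho=R/3$, gives
$$\bigl\|\lozenge_{x^\ast}F(x)-\lozenge_{\cx}^{\hx}F(x)\bigr\|\;\le\;\gamma_F\Bigl(\rho R+\tfrac12\rho^2\Bigr)\;=\;\tfrac{7}{18}\,\gamma_F R^2\qquad\text{for }x\in\overline{B(x^\ast,R)}.$$
Metric regularity, applied with $x=x^\ast$ and with $y$ equal to this perturbation term, then yields $\|\Phi(x)-x^\ast\|=\|g(y)-g(0)\|\le c\|y\|\le\tfrac{7}{18}c\gamma_F R^2$, which is $\le\tfrac{7}{18}R<R$ by the hypothesis $R\le 1/(c\gamma_F)$. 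Thus $\Phi$ carries $\overline{B(x^\ast,R)}$ into the strictly smaller ball $\overline{B(x^\ast,\tfrac{7}{18}R)}\subset B(x^\ast,R)$.

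The step that needs genuine care is checking that $g$ is defined and continuous on the range of the map $x\mapsto\lozenge_{x^\ast}F(x)-\lozenge_{\cx}^{\hx}F(x)$. Metric regularity, taken with the point $x=g(y_1)\in\mathcal D$ and value $y_2$, shows that $g$ is $c$-Lipschitz on $\lozenge_{x^\ast}F(\mathcal D)$, so it suffices to verify $\lozenge_{x^\ast}F(\mathcal D)\supseteq B(0,\bar R/c)$; the perturbation term then lands safely in $B(0,\tfrac{7}{18}\bar R/c)$ because $R\le\bar R$. This inclusion follows from a short boundary argument: $\lozenge_{x^\ast}F(\mathcal D)$ is open (invariance of domain) and contains $0$; if some $y^\ast$ with $\|y^\ast\|<\bar R/c$ lay outside it, let $y^{\ast\ast}$ be the first boundary point on the segment $[0,y^\ast]$, observe that the $c$-Lipschitz map $g$ is Cauchy along $[0,y^{\ast\ast})$ with a limit $\xi$ satisfying $\|\xi-x^\ast\|\le c\|y^{\ast\ast}\|<\bar R$, hence $\xi\in\mathcal D$, and conclude $\lozenge_{x^\ast}F(\xi)=y^{\ast\ast}$ by continuity of $\lozenge_{x^\ast}F$, contradicting $y^{\ast\ast}\notin\lozenge_{x^\ast}F(\mathcal D)$.

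With this in place $\Phi$ is a continuous map of the compact convex set $\overline{B(x^\ast,R)}$ into itself, so Brouwer's fixed-point theorem supplies a fixed point $\bar x$; by the self-map estimate $\bar x\in B(x^\ast,\tfrac{7}{18}R)\subset B(x^\ast,R)$, and $\lozenge_{\cx}^{\hx}F(\bar x)=0$ by the equivalence noted in the first paragraph. The main obstacle is precisely the domain and continuity bookkeeping for $g$ in the third paragraph; the estimates themselves are immediate once Lemma \ref{refinement} and metric regularity are in hand. One could instead extract a Lipschitz bound for the difference map $\lozenge_{x^\ast}F-\lozenge_{\cx}^{\hx}F$ from Proposition \ref{prop:approx} and run a Banach contraction argument, but the self-mapping version above is shorter and already delivers the stated existence.
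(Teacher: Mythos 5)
Your proof is correct and follows essentially the same route as the paper: a Brouwer fixed-point argument on the ball, with the self-map estimate obtained by combining Lemma \ref{refinement} (taken at $\ry=x^\ast$) with metric regularity to land in radius $\tfrac{7}{18}R$. The only differences are cosmetic --- the paper uses the operator $T(x)=x+x^\ast-\left(\lozenge_{x^\ast}F\mid_{\mathcal D}\right)^{-1}\left(\lozenge_{\cx}^{\hx}F(x)\right)$ and applies metric regularity at the point $x$ rather than at $x^\ast$, and it leaves implicit the domain and continuity bookkeeping for the inverse that you verify explicitly.
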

\begin{proof}
We intend to apply the Brouwer theorem to the fixed point operator
 \begin{equation}
  T(x) = x + x^\ast- \left(\lozenge_{x^\ast}F\mid_{\cal
D}\right)^{-1}\left(\lozenge_{\cx}^{\hx}F(x)\right).
 \end{equation}
 Any fixed point is then a root of $\lozenge_{\cx}^{\hx}F$.
 As $T$ is obviously continuous, we only need to show that $T$ maps $B(x^\ast,R)$ into
itself. Using metric regularity and Lemma ~\ref{refinement} we find
 \begin{align*}
  \|T(x)-x^\ast\|&=\left\|x-\left(\lozenge_{x^\ast}F\mid_{\cal
D}\right)^{-1}\left(\lozenge_{\cx}^{\hx}F(x)\right)\right\|
  \\
  &\le c\left\|\lozenge_{x^\ast}F(x)-\lozenge_{\cx}^{\hx}F(x)\right\|
  \\
  &\le c\gamma_F\big[\max(\|\cx-x^\ast\|,\|\hx-x^\ast\|)\,\|x-x^\ast\|\; +\;\tfrac12\|\cx-x^\ast\|\,\|\hx-x^\ast\|\big]
  \\
  &\le \frac{\rho}R\left(R+\frac12\rho\right)\le\frac{7}{18}R.
 \end{align*}
\end{proof}

\begin{lemma}[containment of the roots]\label{containment} Under the same assumptions 
any root of $\lozenge_{\cx}^{\hx}F$ in $B(x^\ast,R)$ is actually contained in
$B(x^\ast,\rho)$.
\end{lemma}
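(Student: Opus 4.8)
The idea is to bootstrap: we already know from Lemma~\ref{rootEx} that any root $x$ of $\lozenge_{\cx}^{\hx}F$ lies in $B(x^\ast,R)$, and now we want to upgrade this to the smaller ball $B(x^\ast,\rho)$ with $\rho = R/3$. The mechanism is exactly the same fixed-point identity as in the previous lemma, but applied more carefully once we already have the a~priori bound $\|x-x^\ast\|\le R$ rather than only the trivial containment. So first I would recall that if $x$ is a root of $\lozenge_{\cx}^{\hx}F$, then $x$ is a fixed point of the operator $T$ from the proof of Lemma~\ref{rootEx}, hence
$$
\|x-x^\ast\| = \|T(x)-x^\ast\| = \left\|x-\left(\lozenge_{x^\ast}F\mid_{\cal D}\right)^{-1}\left(\lozenge_{\cx}^{\hx}F(x)\right)\right\|.
$$

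Next I would run the same chain of estimates — metric regularity with constant $c$, followed by Lemma~\ref{refinement} to bound $\|\lozenge_{x^\ast}F(x)-\lozenge_{\cx}^{\hx}F(x)\|$ — to obtain
$$
\|x-x^\ast\| \le c\gamma_F\Bigl[\max(\|\cx-x^\ast\|,\|\hx-x^\ast\|)\,\|x-x^\ast\| + \tfrac12\|\cx-x^\ast\|\,\|\hx-x^\ast\|\Bigr].
$$
Now using $\cx,\hx\in B(x^\ast,\rho)$, $c\gamma_F\le 1/R$, and the a~priori bound $\|x-x^\ast\|\le R$ from Lemma~\ref{rootEx}, the right-hand side is at most
$$
\frac1R\Bigl[\rho\,\|x-x^\ast\| + \tfrac12\rho^2\Bigr] = \frac{\rho}{R}\|x-x^\ast\| + \frac{\rho^2}{2R} = \frac13\|x-x^\ast\| + \frac{\rho}{6}.
$$
Solving the resulting scalar inequality $\|x-x^\ast\|\le \tfrac13\|x-x^\ast\| + \tfrac{\rho}{6}$ gives $\tfrac23\|x-x^\ast\|\le\tfrac{\rho}{6}$, i.e. $\|x-x^\ast\|\le\tfrac{\rho}{4}<\rho$, which is the claim (in fact with room to spare).

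The only subtlety — and the one place where the argument differs from Lemma~\ref{rootEx} — is that here the factor $\|x-x^\ast\|$ appearing on the right must \emph{not} be crudely replaced by $R$; instead it has to be kept symbolic and absorbed to the left-hand side, which is what makes the radius shrink. I expect no real obstacle: the estimate is a routine repetition of the metric-regularity-plus-Lemma~\ref{refinement} computation, and the bookkeeping with the constants $R$, $\rho$, $c$, $\gamma_F$ is straightforward. A minor point to state cleanly is that $\lozenge_{\cx}^{\hx}F(x)$ indeed lies in the domain $\lozenge_{x^\ast}F({\cal D})$ on which the restricted inverse and metric regularity estimate are available; this is inherited from the setup of Lemma~\ref{rootEx}, since $x\in B(x^\ast,R)={\cal D}$ and the relevant range condition was already used there.
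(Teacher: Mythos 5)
Your argument is correct and follows the paper's proof essentially verbatim: metric regularity applied with $y=\lozenge_{\cx}^{\hx}F(x)=0$, then Lemma~\ref{refinement}, then absorbing the $\tfrac13\|x-x^\ast\|$ term into the left-hand side to get $\|x-x^\ast\|\le\tfrac14\rho$. The only (harmless for this lemma) difference is that you bound $\|\cx-x^\ast\|\,\|\hx-x^\ast\|$ by $\rho^2$ immediately, whereas the paper keeps the product form $\|x-x^\ast\|\le\|\cx-x^\ast\|\,\|\hx-x^\ast\|/(4\rho)$, which is the inequality it reuses afterwards to derive the quadratic and golden-ratio convergence rates.
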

\begin{proof}
 Let $x$ be a root of $\lozenge_{\cx}^{\hx}F$. Then again using metric regularity and 
 Lemma~\ref{refinement} the distance to $x^*$ has the bound
 \begin{align*}
  \|x-x^\ast\|&\le
c\|\lozenge_{x^\ast}F(x)\|=c\|\lozenge_{x^\ast}F(x)-\lozenge_{\cx}^{\hx}F(x)\|
  \\
  &\le c\gamma_F
\big[\max(\|\cx-x^\ast\|,\|\hx-x^\ast\|)\,\|x-x^\ast\|\; +\;\tfrac12\|\cx-x^\ast\|\,\|\hx-x^\ast\|\big]
  \\
  &\le \frac13\,\|x-x^\ast\| + \frac{\|\cx-x^\ast\|\,\|\hx-x^\ast\|}{6\rho}
 \end{align*}
  so that
  \begin{equation}\label{superlinear}
  \|x-x^\ast\|\le \frac{\|\cx-x^\ast\|\,\|\hx-x^\ast\|}{4\rho}\le
\frac14\min(\|\cx-x^\ast\|,\|\hx-x^\ast\|)<\frac14\rho.
  \end{equation}
\end{proof}

\begin{corollary}[Newton iteration] Let $\lozenge_{x^\ast}F$ be bijective on a ball ${\cal D}=B(x^\ast,\bar R)$, 
where $x^\ast$ is an isolated root of $F\in {\rm span}(\Phi)$.
Then on $B(x^\ast,\rho)$, where $\rho$ is defined as in Lemma \ref{rootEx}, the Newton step is defined in
both the tangent and secant mode and maps back into $B(x^\ast,\rho)$. The thus definable 
Newton iteration converges at least linearly.
\end{corollary}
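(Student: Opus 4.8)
The plan is to obtain the corollary by stitching together Lemmas~\ref{rootEx} and~\ref{containment} with a small compactness argument that makes the $\arg\min$ in Definition~\ref{def1} legitimate, and then to read the convergence rate straight off the contraction estimate~\eqref{superlinear}. Throughout I treat the secant mode with reference pair $\cx,\hx\in B(x^\ast,\rho)$ and midpoint $\rx=\tfrac12(\cx+\hx)$, the tangent mode being the special case $\cx=\hx=\rx$; note that $\cx,\hx\in B(x^\ast,\rho)$ forces $\rx\in B(x^\ast,\rho)$ as well.

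First I would establish that the Newton operator is \emph{defined} on $B(x^\ast,\rho)$. Let $S:=\{x\in\R^n:\lozenge_{\cx}^{\hx}F(x)=0\}$; since $\lozenge_{\cx}^{\hx}F$ is continuous, $S$ is closed. By Lemma~\ref{rootEx}, $S$ meets $B(x^\ast,R)$, and by~\eqref{superlinear} every root in $B(x^\ast,R)$ already satisfies $\|x-x^\ast\|<\tfrac14\rho$; hence $S\cap B(x^\ast,R)=S\cap \overline B(x^\ast,\tfrac14\rho)$ is nonempty, closed and bounded, thus compact, and the continuous map $x\mapsto\|x-\rx\|$ attains its minimum on it, say at $\bar x$, with $\|\bar x-\rx\|\le\tfrac14\rho+\rho=\tfrac54\rho$. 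It remains to rule out that some root of the globally defined model lying outside $B(x^\ast,R)$ is closer to $\rx$: any such $x$ has $\|x-x^\ast\|\ge R=3\rho$, hence $\|x-\rx\|\ge 3\rho-\rho=2\rho>\tfrac54\rho$. Therefore the global $\arg\min$ is nonempty and every minimiser lies in $B(x^\ast,\tfrac14\rho)\subset B(x^\ast,\rho)$; so $N(\cx,\hx)$ (and, in tangent mode, $N(\rx)$) is well defined with values in $B(x^\ast,\tfrac14\rho)$.

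From this the \emph{maps-back} property and well-posedness of the whole iteration follow by induction: given $x_0,x_1\in B(x^\ast,\rho)$ (resp. $x_0\in B(x^\ast,\rho)$ in tangent mode), the previous step shows $x_2=N(x_1,x_0)\in B(x^\ast,\rho)$, and since the Newton step is defined on all of $B(x^\ast,\rho)$ the recursion~\eqref{NewtonDef} continues, with every $x_k\in B(x^\ast,\rho)$. For the rate, plugging $\cx=x_k,\hx=x_{k-1}$ (resp. $\rx=x_k$) into~\eqref{superlinear} yields
$$\|x_{k+1}-x^\ast\|\ \le\ \tfrac14\min\bigl(\|x_k-x^\ast\|,\|x_{k-1}-x^\ast\|\bigr)\ \le\ \tfrac14\|x_k-x^\ast\|$$
in secant mode and $\|x_{k+1}-x^\ast\|\le\tfrac14\|x_k-x^\ast\|$ in tangent mode, so the error contracts by at least the factor $\tfrac14$ per step and $x_k\to x^\ast$ at least linearly.

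I expect the only genuinely delicate point to be the compactness-and-bookkeeping step that confines the $\arg\min$ to $B(x^\ast,R)$: the Newton operator minimises distance over \emph{all} roots of the piecewise linear model, so one must exclude far-away roots, which the elementary inequalities $\|x-\rx\|\ge R-\rho$ versus $\|\bar x-\rx\|\le\tfrac54\rho$ (together with strictness in~\eqref{superlinear}, which leaves no roots on the sphere of radius $\tfrac14\rho$) take care of. Everything else is a direct substitution into Lemmas~\ref{rootEx} and~\ref{containment}. The sharper orders advertised in the introduction --- quadratic in tangent mode, golden-mean order in secant mode --- are not used here; they would require replacing the first-order refinement of Lemma~\ref{refinement} by the second-order error bound of Proposition~\ref{prop:approx}.$(ii)$ and are best treated separately.
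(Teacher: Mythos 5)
Your proof is correct and follows essentially the same route as the paper's: both compare the distance (at most $\tfrac54\rho$) from the reference point(s) to the root guaranteed by Lemmas~\ref{rootEx} and~\ref{containment} against the distance (at least $R-\rho=2\rho$) to any root outside $B(x^\ast,R)$, and both read off linear convergence with contraction factor $\tfrac14$ from equation~\eqref{superlinear}. Your explicit compactness argument for the attainment of the $\arg\min$ is a small amount of extra rigor that the paper leaves implicit.
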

\begin{proof}
 As the next Newton iterate is among the roots of minimal distance to
 the basis point(s) of the linearization, one needs to ensure that any root
 that $\lozenge_{\cx}^{\hx}F$ may have outside the ball $B(x^\ast,R)$ has a 
 larger distance to $\{\cx,\hx\}$ than the root that is known to exist 
 inside $B(x^*,\frac14\rho)$. The distance from the basis points inside $B(x^*,\rho)$
 to the outside of $B(x^*,R)$ is at least $R-\rho=\frac23 R$. The distance from the 
 root of $\lozenge_{\cx}^{\hx}F$ inside $B(x^*,\frac14 R)$ to the basis points is 
 at most $\frac14\rho+\rho=\frac5{12}R$ and thus the smaller distance.
 
 By equation~\eqref{superlinear} of the last lemma we also see that the distance of
 the root of $\lozenge_{\cx}^{\hx}F$
 to $x^*$ is at most $\frac14$ the distance of the basis points to $x^*$, which implies 
 linear convergence.
\end{proof}

\begin{corollary}[convergence rates]
 Let $R=\min(\bar R, \frac1{c\gamma_F})$ and $\rho=\frac{R}3$. Then for all initial
points 
 $x_0\,(,x_1)\in B(x^\ast,\rho)$ the Newton iteration in tangent mode 
$$
x_{k+1} = N(x_k)
$$
 converges quadratically resp. in secant mode 
$$
x_{k+1} = N(x_k,x_{k-1})
$$
converges with order $\frac{1+\sqrt5}2$ 
towards $x^\ast$.
\end{corollary}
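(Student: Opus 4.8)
The plan is to reduce the entire statement to the superlinear contraction estimate \eqref{superlinear} already established in Lemma~\ref{containment}: any root $x$ of $\lozenge_{\cx}^{\hx}F$ lying in $B(x^\ast,R)$ obeys $\|x-x^\ast\|\le\|\cx-x^\ast\|\,\|\hx-x^\ast\|/(4\rho)$. First I would recall, from the preceding ``Newton iteration'' corollary, that for basis point(s) in $B(x^\ast,\rho)$ the Newton step is defined and its value $N(\cdot)$ lies inside $B(x^\ast,R)$ — the outside roots being strictly farther from the basis points — and hence in fact in $B(x^\ast,\rho)$. Consequently, starting from $x_0$ (and also $x_1$ in the secant case) in $B(x^\ast,\rho)$, the iteration \eqref{NewtonDef} is well defined, all iterates remain in $B(x^\ast,\rho)$, and each new iterate satisfies \eqref{superlinear} with the corresponding basis point(s) inserted. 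Set $e_k:=\|x_k-x^\ast\|$; we may assume all $e_k>0$, since otherwise the iteration has already hit $x^\ast$.

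For the tangent mode I would put $\cx=\hx=x_k$ in \eqref{superlinear}, giving $e_{k+1}\le e_k^{\,2}/(4\rho)$. Since $e_0<\rho$ this forces $e_{k+1}<e_k$ for every $k$, hence $e_k\to0$, while $\sup_k e_{k+1}/e_k^{\,2}\le1/(4\rho)<\infty$; that is Q-quadratic convergence.

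For the secant mode I would put $\cx=x_k$, $\hx=x_{k-1}$, obtaining $e_{k+1}\le e_ke_{k-1}/(4\rho)$. Normalising by $\epsilon_k:=e_k/(4\rho)$ turns this into $\epsilon_{k+1}\le\epsilon_k\epsilon_{k-1}$ with $\epsilon_0,\epsilon_1<\tfrac14=:q<1$. An induction then yields $\epsilon_k\le q^{a_k}$ with $a_0=a_1=1$, $a_{k+1}=a_k+a_{k-1}$, so $a_k$ is a Fibonacci number with closed form $a_k=(t^{k+1}-s^{k+1})/\sqrt5$, where $t=\tfrac{1+\sqrt5}2$ and $s=-1/t=\tfrac{1-\sqrt5}2$. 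Thus $a_k\to\infty$, so $e_k\to0$, and $a_{k+1}-t\,a_k=s^{k+1}\to0$, whence
\[ \frac{e_{k+1}}{e_k^{\,t}}=(4\rho)^{1-t}\,\frac{\epsilon_{k+1}}{\epsilon_k^{\,t}}\le(4\rho)^{1-t}\,q^{\,a_{k+1}-t a_k}\ \longrightarrow\ (4\rho)^{1-t}\qquad(k\to\infty), \]
so $\limsup_k e_{k+1}/e_k^{\,t}<\infty$ and the secant iteration converges with order at least $t=\tfrac{1+\sqrt5}2$; the analogous estimate for $e_k^{1/t^{k}}$, using $a_k/t^k\to t/\sqrt5$, yields the matching R-order statement.

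I expect the only genuinely delicate point to be the exponent bookkeeping in the secant case: one has to carry the Fibonacci-growing exponents $a_k$ through the induction and, crucially, use their exact closed form rather than merely the asymptotics $a_k\sim t^{k+1}/\sqrt5$, since it is the cancellation $a_{k+1}-t\,a_k=s^{k+1}\to0$ that keeps the quotient $e_{k+1}/e_k^{\,t}$ bounded; one should also be explicit about whether Q-order or R-order is being claimed. Everything else is a direct substitution into \eqref{superlinear} together with the well-definedness and self-mapping properties already provided by the previous corollary.
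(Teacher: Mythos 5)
Your overall strategy is exactly the paper's: both arguments reduce the corollary to estimate \eqref{superlinear} of Lemma~\ref{containment}, note (via the preceding corollary) that the iteration is well defined and stays in $B(x^\ast,\rho)$, substitute the iterates for the basis points to get $e_{k+1}\le e_k^2/(4\rho)$ in tangent mode and $e_{k+1}\le e_k e_{k-1}/(4\rho)$ in secant mode, and then run the standard recurrence analysis. Your Fibonacci-exponent bound $\epsilon_k\le q^{a_k}$ and the resulting estimate $e_j\le 4\rho\,(\max(\epsilon_0,\epsilon_1))^{\phi^j}$ are precisely the paper's displayed conclusion, and your tangent-mode Q-quadratic claim is legitimate because $e_{k+1}/e_k^{2}\le 1/(4\rho)$ holds uniformly there.

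The one step that does not hold as written is the Q-order display in the secant case. In
\[
\frac{\epsilon_{k+1}}{\epsilon_k^{\,t}}\;\le\; q^{\,a_{k+1}-t a_k}
\]
the numerator estimate $\epsilon_{k+1}\le q^{a_{k+1}}$ is available, but the denominator needs the \emph{lower} bound $\epsilon_k\ge q^{a_k}$, whereas your induction only delivers the upper bound. From the one-sided recurrence alone one has $e_{k+1}/e_k^{\,t}\le(4\rho)^{-1}e_k^{\,1-t}e_{k-1}$ with $1-t<0$, so if some $e_k$ drastically undershoots its bound this quotient can blow up along a subsequence; hence $\limsup_k e_{k+1}/e_k^{\,t}<\infty$ (Q-order $t$) does not follow from $e_{k+1}\le e_k e_{k-1}/(4\rho)$. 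What does follow — and what the paper's own proof establishes and the corollary requires — is the R-order statement you give at the end. So either drop the Q-order display or restate it as the R-order conclusion; with that repair your proof coincides with the paper's.
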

%
%
%
%
\begin{proof}
With the choice of the initial points and by Lemma~\ref{containment}, the full iteration sequence 
stays inside $B(x^*,\rho)$. 

\emph{Tangent mode:}
By replacing $x,\cx,\hx$ with $x_{j+1},x_j,x_j$  equation~\eqref{superlinear} in Lemma~\ref{containment} 
we get
 \begin{align*}
  \|x_{j+1}-x^*\|\le \frac1{4\rho}\|x_j-x^*\|^2
 \end{align*}
 which implies the quadratic convergence of the sequence $(x_j)$ towards $x^*$,
 \[
   \|x_j-x^*\|\le 4\rho\left(\frac{\|x_0-x^*\|}{4\rho}\right)^{2^j}.
 \]

\emph{Secant mode:}
Replacing $x,\cx,\hx$ with $x_{j+2},x_{j+1},x_j$ in equation~\eqref{superlinear} 
in Lemma~\ref{containment} one finds
 \begin{align*}
  \|x_{j+2}-x^*\|\le \frac1{4\rho}\|x_{j+1}-x^*\|\,\|x_j-x^*\|.
 \end{align*}
 As in the scalar secant method, this implies convergence with rate $\phi=\frac{1+\sqrt5}2$
 or more precisely
 \[
   \|x_j-x^*\|\le 4\rho\left(\frac{\|x_0-x^*\|}{4\rho}\right)^{F_{j-1}}\left(\frac{\|x_1-x^*\|}{4\rho}\right)^{F_j}
   \le 4\rho\left(\frac{\max(\|x_0-x^*\|,\|x_1-x^*\|)}{4\rho}\right)^{\phi^j}.
 \]
 where $(F_j)$ is the Fibonacci sequence with $F_0=0$, $F_1=1$ and $F_{j+1}\le \phi^j$.
\end{proof}
The inner iterations of \eqref{NewtonDef} require the 
solution of piecewise linear equations. 
However, these may possess several solutions \(\Delta x_j\), of which we must find one of minimal norm. 
So far, the solvers that we know and surveyed in \cite{mother, son, proceed, SGE} require at least 
coherent orientation to guarantee convergence. Hence, the actual implementation of successive 
piecewise linearization needs further study. 

\section{Singularity Free Implementation}
\label{appendix}

In contrast to the tangent mode of piecewise linearization, the secant mode involves two 
points of evaluation \(\check v_i, \hat v_i \in \R^n\), 
which means that its computational cost of the primal values are roughly twice that of the tangent mode.
These define a line segment 
\([\check v_i, \hat v_i] \equiv \{ \lambda \check v_i + (1-\lambda)\hat v_i \mid \lambda \in [0, 1]\}\) 
for any intermediate operation. The formal definition of the secant slope given in equation \eqref{:cdef} 
may cause numerically unstable divisions when the denominator gets small during the 
transition from secant to tangent mode,
e.g. when the secant mode Newton iteration scheme converges.\\
However, in this section we will provide \emph{singularity free} closed form expressions. 
Therefore an exception handling at \(\check v_i = \hat v_i\) will no longer be necessary. 
To that end we move from the line segment representation to a midpoint-radius based representation. 
Now let \(v = \varphi(u)\), where \(\varphi \in \{\sin, \exp, \dots\}\) is some elementary operation and
\begin{align*}
	(\check v_i, \hat v_i) = (\varphi(\check v_j), \varphi(\hat v_j))\,\,
	\mapsto\,\, (\mathring v_i, \mathrad v_i), \quad \text{where} \quad \mathring
	v_i = \frac{\check v_i + \hat v_i}2\;\text{ and }\; \mathrad v_i = -\frac{\check v_i -
	\hat v_i}2\,.
\end{align*}
We adopted the concept of representing intervals via midpoint and radius from interval arithmetic calculus 
(described in detail e.g. by Siegfrid Rump in \cite{Rump99}, 
or by G\"otz Alefeld and J\"urgen Herzberger in \cite{alefeld2012introductionInterval}). 
We remark though, that in the present setting the radius \(\mathrad v_i \in \R\) is allowed to become negative as well. 

Now one can rewrite the secant slope of differentiable functions to the new representation 
\[c_{ij} \equiv \frac{\check v_i - \hat v_i}{\check v_j - \hat
v_j} = \frac{\mathrad v_i}{\mathrad v_j}\,. \]
Using some algebraic manipulations one can find individual formulas for the aforementioned propagation rules of the secant mode:
\newcolumntype{C}{>{\displaystyle} c}
\begin{align*}
	\def\arraystretch{1.5}
	\begin{array}{|C|C|C|C|C|}
		\hline
		\text{binary operation} & \mathring v_i & \mathrad v_i & c_{ij} & c_{ik} \\
		\hline
		\hline
		v_i = v_j + v_k & \mathring v_j + \mathring v_k & \mathrad v_j + \mathrad v_k & 1
		& 1	\\
		v_i = v_j - v_k & \mathring v_j - \mathring v_k & \mathrad v_j - \mathrad v_k & 1
		& -1 \\
		v_i = v_j \cdot v_k & \mathring v_j \mathring v_k + \mathrad v_j\mathrad v_k &
		\mathrad v_j \mathring v_k + \mathring v_j \mathrad v_k & \mathring v_k &
		\mathring v_j \\
		v_i = \frac{v_j}{v_k} & \frac{\mathring v_j\mathring v_k - \mathrad v_j\mathrad
		v_k}{\mathring v_k^2 - \mathrad v_k^2} 
						      & \frac{\mathrad v_j \mathring v_k - \mathring v_j \mathrad
							    v_k}{\mathring v_k^2 - \mathrad v_k^2} 
							  & \frac 1{\mathring v_k} &
							    -\frac{\mathring v_j}{\mathring v_k^2 - \mathrad v_k^2}
							    \\
		\hline
	\end{array}
	\def\arraystretch{1.}
\end{align*}
Alternatively, we could represent \(v_i = v_j / v_k\) as an application of a multiplication on the reciprocal \(1/v_k\). Furthermore, we can represent the multiplication by the Appolonius identity as above. Moreover, for unary operations we get:
\begin{align*}
	\def\arraystretch{1.5}
	\begin{array}{|C|C|C|C|}
		\hline
		\text{unary operation} & \mathring v_i & \mathrad v_i & c_{ij} \\
		\hline
		\hline
		v_i = \sin(v_j) & \sin(\mathring v_j)\cos(\mathrad v_j) & \cos(\mathring
		v_j)\sin(\mathrad v_j) & \cos(\mathring v_j)\sinc(\mathrad v_j) \\
		v_i = \cos(v_j) & \cos(\mathring v_j)\cos(\mathrad v_j) & -\sin(\mathring
		v_j)\sin(\mathrad v_j) & -\sin(\mathring v_j)\sinc(\mathrad v_j) \\
		v_i = \exp(v_j) & \exp(\mathring v_j)\cosh(\mathrad v_j) & \exp(\mathring
		v_j)\sinh(\mathrad v_j) & \exp(\mathring v_j)\sinhc(\mathrad v_j) \\
		v_i = \log(v_j) & \frac 12\log(\mathring v_j^2 + \mathrad v_j^2) &
		\artanh\left( \frac{\mathrad v_j}{\mathring v_j} \right) & 
		\frac 1{\mathring v_j}\artanhc\left( \frac{\mathrad v_j}{\mathring v_j} \right)
		\\
		\hline
	\end{array}
	\def\arraystretch{1.}
\end{align*}
Note that by, e.g., \cite{mathworld_sinhc} \(\sinc(x)\) and \(\sinhc(x)\) (hyperbolic \(\sinc(x)\)) have regular Taylor expansions
\begin{align*}
	\sin(x) = x\cdot \sinc(x) = x\cdot \sum_{i=0}^\infty \frac{(-x^2)^n}{(2n+1)!},
	\quad \sinc(x) \equiv x\cdot \sinhc(x) \equiv
	x\cdot \sum_{i=0}^\infty
	\frac{x^{2n}}{(2n+1)!} \,.
\end{align*} 
We want to define \(\artanhc(x)\) similar to \(\sinhc(x)\) via
its Taylor expansion:
\[ \tanh^{-1}(x) = \artanh(x) = x\cdot
\artanhc(x) = x\cdot \sum_{i=0}^\infty \frac{x^{2n}}{2n+1} \]
and it can be implemented in a similar fashion as \(\sinhc(x)\) from the boost c++ libraries
(see \cite{BOOST_sinhc}).
For Root functions (\(v_i = \sqrt[c]{v_j}\)), general Powers (\(v_i = v_j^c\), or in a binary fashion \(v_i = v_j^{v_k}\)) and monomials (\(v_i = v_j^n\)) one can use the identity
\begin{align*}
	v_i = v_j^c = \exp(c\cdot \log(v_j)) \quad \text{or} \quad v_i = v_j^{v_k} =
	\exp(v_k\cdot \log(v_j))
\end{align*}
and apply the rules above. Of course, the base \(v_j > 0\) has to be positive, but there is a less restrictive alternative for monomials:
\begin{align*}
	\def\arraystretch{1.5}
	\begin{array}{|C|C|C|C|}
		\hline
		\text{monomials} & \mathring v_i & \mathrad v_i & c_{ij} \\
		\hline
		\hline
		\begin{array}{c} v_i = v_j^n \\ (n\text{ natural number}) \end{array} &
		\sum_{\substack{k\le n
		\\
		\text{even}}} \binom nk \mathring v_j^{n-k} \mathrad v_j^k 
			  & \sum_{\substack{k\le n \\ \text{odd}}}
			    \binom nk \mathring v_j^{n-k} \mathrad v_j^k & 
			    \sum_{\substack{k\le n \\ \text{odd}}}
			    \binom nk \mathring v_j^{n-k} \mathrad v_j^{k-1}
			    \\
		\begin{array}{c} v_i = v_j^2 \\ (\text{special case: square}) \end{array} &
		\mathring v_j^2 + \mathrad v_j^2 & 2\mathring v_j\mathrad v_j & 2\mathring v_j \\
		\hline
	\end{array}
	\def\arraystretch{1.}
\end{align*}

\begin{remark}[General approximation for unary operations]
Of course one can find a lot more singularity free formulas for the secant
slopes of other operations. Using a Taylor expansion approach one can
provide general approximation formulas for the triplet \(\mathring v_i, \mathrad
v_i\) and \(c_{ij}\) by:
\begin{align*}
	\mathring v_i &= \frac 12 \left( \sum_{k\ge 0}
	\frac{\varphi^{(k)}(\mathring v_j)}{k!}\mathrad v_j^k + \sum_{k\ge 0}
	\frac{\varphi^{(k)}(\mathring v_j)}{k!}(-\mathrad v_j)^k \right)
	= \sum_{\substack{k\ge 0 \\ k \text{ even}}}
	\frac{\varphi^{(k)}(\mathring v_j)}{k!} \mathrad v_j^k, \\
	\mathrad v_j &= \frac{\varphi(\mathring v_j + \mathrad v_j) - \varphi(\mathring v_j
	- \mathrad v_j)}2 = \sum_{\substack{k > 0 \\ k \text{ odd}}}
	\frac{\varphi^{(k)}(\mathring v_j)}{k!} \mathrad v_j^k,\quad c_{ij} =
	\sum_{\substack{k > 0 \\ k \text{ odd}}}
	\frac{\varphi^{(k)}(\mathring v_j)}{k!} \mathrad v_j^{k-1}.
%
\end{align*}
\end{remark}

%

\section{Numerical Example}

Consider the function
\begin{align*}
  F(x) = \left[\begin{array}{rr}
    \cos(\varphi(\angle x) - \angle x) & -\sin(\varphi(\angle x) - \angle x) \\
    \sin(\varphi(\angle x) - \angle x) & \cos(\varphi(\angle x) - \angle x)
  \end{array}\right]\cdot x - c\, ,
\end{align*}
where \(c=[1.001, 10.01]^\top\), and \(\angle\! x \in {[0, 2\pi[}\) is the angle of
\(x=(x_1,x_2)^\top\) in polar coordinate representation. Moreover, \( \varphi \), which is defined by
\[ \varphi(\psi) \equiv \psi + \frac{8}{5\pi}\psi^2 -
\frac{8}{5\pi^2}\psi^3 + \frac{2}{5\pi^3}\psi^4 \] 
maps \({[0, 2\pi[}\) strictly monotonically onto itself. Hence, $F(x)$ is bijective.
Furthermore, the function is differentiable everywhere except at the origin. 
There it is, just as the Euclidean norm, locally Lipschitz continuous and not
even piecewise differentiable in the sense of \cite{scholtespl}.

We investigated the behavior of the tangent and secant mode Newton on $F$ both with and without noise. 
That is, we investigated $F$ and $\tilde F$, where 
$$\tilde F(x) = F(x) + \frac{\sin(5000\cdot [x_1 +x_2])}{10^4} \; .$$
The secant mode was started with the initial values $\check x=[-3.7,-2.05]^\top$ and $\hat x= [7.0, 8.0]^\top$. 
The tangent mode was started with the mean value of the latter points.

We recall the well known formula for approximating the convergence rate numerically:
\[ \gamma \approx \frac{\log\left|\frac{x_{n+1} - x_n}{x_n - x_{n-1}}\right|}{\log\left|\frac{x_n - x_{n-1}}{x_{n-1} - x_{n-2}}\right|}\, . \]

The first table shows both modes' residuals in the iterations without noise. 
\begin{align*}
  \begin{array}{|c|c|c|}
    \hline
    \text{iteration} & \text{residual with tangent mode} & \text{res. with secant mode} \\
    \hline
    \hline
    0 & 13.3919956235 & 5.81435555868 \\
    1 & 5.65630249881 & 19.6157765738 \\
    2 & 3.39957297287 & 4.99712831052 \\
    3 & 0.00920821188601 & 1.2635478817 \\
    4 & 2.65403135735e-06 & 0.0882985228824 \\
    5 & 2.13162820728e-13 & 0.00192152171278 \\
    6 & 8.881784197e-15 & 5.19844852542e-06 \\
    \hline
    7 && 3.0607116841e-10 \\
    8 && 8.881784197e-16 \\
    \hline
    \hline
    \gamma & 2.07814399547 \approx 2 & 1.64753467681 \approx \frac{1+\sqrt 5}2 \\
    \hline
  \end{array}
\end{align*}

The next table shows the residuals of the iterations with noise.
\begin{align*}
  \begin{array}{|c|c|c|}
    \hline
    \text{iteration} & \text{residual with tangent mode} & \text{res. with secant mode} \\
    \hline
    \hline
    0 & 13.3920216719 & 5.81445152008 \\
    1 & 60.308012713 & 19.6157077846 \\
    2 & 81.8554424857 & 4.99709320593 \\
    3 & 8.54532016753 & 1.26352999658 \\
    4 & 5.69986744799 & 0.0883961689187 \\
    5 & 1.92933721639 & 0.00178884970844 \\
    6 & 0.425650504358 & 0.00013247524387 \\
    7 & 0.127098157087 & 3.54530493811e-05 \\
    8 & 0.0253173333077 & 2.27749388494e-06 \\
    9 & 0.00248354296218 & 3.65673925216e-08 \\
    10 & 0.000950425306504 & 3.71373256312e-11 \\
    11 & 6.728080753e-05 & 4.95408564432e-15 \\
    \hline
    12 & 1.87922375892e-06 & \\
    13 & 1.57356337999e-09 & \\
    14 & 2.89904818901e-15 & \\
    \hline
    \hline
    \gamma & 2.01181918489 \approx 2 & 1.66153582947 \approx \frac{1+\sqrt 5}2 \\
    \hline
  \end{array}
\end{align*}
Both methods attain their theoretical convergence rates in either iteration. However, we observe that the secant mode 
fares better with the problem with noise as it cuts through the latter, while the tangent Newton is thrown 
off for several iteration steps.

\section{Final Remarks}
The framework developed in the present paper, as well as in \cite{mother, son, proceed, SGE, ODE}, 
aims at presenting viable alternatives to current approaches to piecewise differentiability 
as it may occur, e.g., in nonsmooth nonlinear systems or ODEs with nonsmooth right hand side. 
The piecewise linearizations, which were first introduced in \cite{mother} can be obtained 
in an automated fashion by an adaptation of AD tools such as ADOL-C \cite{adolc}. 

The generalized Newton iterations introduced in Section 5 are intended as an alternative to 
semismooth Newton \cite{semismooth}. The quadratic convergence rate of the tangent version 
is in line with that of semismooth Newton. While for one step of semismooth Newton an appropriate element of the generalized derivative has to be calculated, the piecewise linear Newton's methods solve a piecewise 
linear system in each step. Both problems are NP-hard in general, but may be solvable in practice 
with essentially the same effort as an ordinary linear system. Hence, it is likely highly 
situation dependent, which approach yields better performance. 

It is our hope that, in combination with the formulas for numerically stable implementation 
of the secant linearization, the generalized Newton's methods can be developed into robust and 
stable workhorse algorithms, which might even outperform the quadratic methods on selected problems. 
For example on problems with oscillating noise we observed that the secant method required 
fewer Newton steps to converge, as it cuts through the oscillations. 


\section*{Acknowledgements}
The work for the article has been partially conducted within the Research Campus
MODAL funded by the German Federal Ministry of Education and Research
(BMBF) (fund number \(05\text{M}14\text{ZAM}\)).

\bibliographystyle{alphadin}
\bibliography{autodiff}

\begin{thebibliography}{GHRS16}


\providecommand{\url}[1]{\texttt{#1}}
\expandafter\ifx\csname urlstyle\endcsname\relax
  \providecommand{\doi}[1]{doi: #1}\else
  \providecommand{\doi}{doi: \begingroup \urlstyle{rm}\Url}\fi

\bibitem[AH12]{alefeld2012introductionInterval}
\textsc{Alefeld}, G. ; \textsc{Herzberger}, J.:
\newblock \emph{Introduction to Interval Computation}.
\newblock Elsevier Science, 2012 (Computer Science and Applied Mathematics).
\newblock \url{https://books.google.de/books?id=rUsX5x0OqUcC}. --
\newblock ISBN 9780080916361

\bibitem[Cla83]{clarke}
\textsc{Clarke}, F.H.:
\newblock \emph{Optimization and Nonsmooth Analysis}.
\newblock Wiley-Interscience, 1983 (Canadian Mathematical Society Series of
  Monographs and Advanced Texts). --
\newblock ISBN 047187504X

\bibitem[GBRS15]{son}
\textsc{Griewank}, A. ; \textsc{Bernt}, J.U. ; \textsc{Radons}, M.  ;
  \textsc{Streubel}, T.:
\newblock Solving piecewise linear systems in abs-normal form.
\newblock {In: }\emph{Linear Algebra and its Applications} 471 (2015), Nr. 0,
  500 - 530.
\newblock
  \url{http://www.sciencedirect.com/science/article/pii/S0024379514008209}

\bibitem[GHRS16]{ODE}
\textsc{Griewank}, A. ; \textsc{Hasenfelder}, R. ; \textsc{Radons}, M.  ;
  \textsc{Streubel}, T.:
\newblock Integrating Lipschitzian Dynamical Systems using Piecewise
  Algorithmic Differentiation.
\newblock {In: }\emph{Submitted 2016}  (2016)

\bibitem[Gri13]{mother}
\textsc{Griewank}, A.:
\newblock On stable piecewise linearization and generalized algorithmic
  differentiation.
\newblock {In: }\emph{Optimization Methods and Software} 28 (2013), Nr. 6,
  1139-1178.
\newblock \url{http://dx.doi.org/10.1080/10556788.2013.796683}. --
\newblock DOI 10.1080/10556788.2013.796683

\bibitem[GW08]{buch}
\textsc{Griewank}, A. ; \textsc{Walther}, A.:
\newblock \emph{Evaluating Derivatives: Principles and Techniques of
  Algorithmic Differentiation}.
\newblock Society for Industrial and Applied Mathematics (SIAM), 2008 (Other
  Titles in Applied Mathematics).
\newblock \url{http://epubs.siam.org/doi/book/10.1137/1.9780898717761}. --
\newblock ISBN 978--0--89871--776--1

\bibitem[Hin10]{semismooth}
\textsc{Hintermüller}, M.:
\newblock \emph{Semismooth Newton methods and applications}.
\newblock Oberwolfach Seminar on Mathematics of PDE-Constrained Optimization,
  Mathematisches Forschunginstitut, Oberwolfach, Germany, 2010

\bibitem[iee85]{ieee}
IEEE Standard for Binary Floating-Point Arithmetic.
\newblock {In: }\emph{ANSI/IEEE Std 754-1985}  (1985).
\newblock \url{http://dx.doi.org/10.1109/IEEESTD.1985.82928}

\bibitem[Kum88]{kummer}
\textsc{Kummer}, B.:
\newblock Newton's method for non-differentiable functions.
\newblock {In: }\emph{Advances in Math. Optimization}  (1988)

\bibitem[lib]{BOOST_sinhc}
\textsc{libraries} boost~c.:
\newblock \emph{boost/math/special\_functions/sinhc.hpp}. --
\newblock retrieved from
  \url{http://www.boost.org/doc/libs/1_62_0/boost/math/special_functions/sinhc.hpp},
  Last visited on 12/01/2016

\bibitem[Rad16]{SGE}
\textsc{Radons}, M.:
\newblock Direct solution of piecewise linear systems.
\newblock {In: }\emph{Theoretical Computer Science} 626 (2016), S. 97--109

\bibitem[Rad17]{degree}
\textsc{Radons}, M.:
\newblock Degree theory for piecewise affine functions.
\newblock {In: }\emph{Submitted 2017}  (2017)

\bibitem[Rum99]{Rump99}
\textsc{Rump}, Siegfried~M.:
\newblock Fast and Parallel Interval Arithmetic.
\newblock {In: }\emph{BIT Numerical Mathematics} 39 (1999), Nr. 3, 534--554.
\newblock \url{http://dx.doi.org/10.1023/A:1022374804152}. --
\newblock DOI 10.1023/A:1022374804152. --
\newblock ISSN 1572--9125

\bibitem[Sch12]{scholtespl}
\textsc{Scholtes}, S.:
\newblock \emph{Introduction to Piecewise Differentiable Equations}.
\newblock Springer New York, 2012 (SpringerBriefs in optimization).
\newblock \url{http://link.springer.com/book/10.1007/978-1-4614-4340-7}. --
\newblock ISBN 978--1--4614--4340--7

\bibitem[SGRB14]{proceed}
\textsc{Streubel}, T. ; \textsc{Griewank}, A. ; \textsc{Radons}, M.  ;
  \textsc{Bernt}, J.U.:
\newblock Representation and Analysis of Piecewise Linear Functions in
  Abs-normal form.
\newblock {In: }\emph{Proc. of the IFIP TC 7}  (2014), S. 323--332

\bibitem[Wei16]{mathworld_sinhc}
\textsc{Weisstein}, Eric~W.:
\newblock \emph{Sinhc Function. {From MathWorld---A Wolfram Web Resource}}.
\newblock \url{\url{http://mathworld.wolfram.com/SinhcFunction.html}}.
\newblock \,Version:\,1999-2016. --
\newblock Last visited on 12/01/2016

\bibitem[WG12]{adolc}
\textsc{Walther}, A. ; \textsc{Griewank}, A.:
\newblock Getting started with ADOL-C.
\newblock {In: }\textsc{Naumann}, U. (Hrsg.) ; \textsc{Schenk}, O. (Hrsg.):
  \emph{Combinatorial Scientific Computing}.
\newblock Chapman-Hall CRC Computational Science, 2012, Kapitel ~7, S. 181--202

\end{thebibliography}

\end{document}